\documentclass[11pt]{amsart}

\usepackage{graphicx}
\usepackage{float}
\usepackage[toc,page]{appendix}
\usepackage{bm}
\usepackage{comment}
\usepackage{fullpage}

\newtheorem{theorem}{Theorem}[section]
\newtheorem{heuristic}[theorem]{Heuristic}
\newtheorem{lemma}[theorem]{Lemma}
\newtheorem{conjecture}[theorem]{Conjecture}
\newtheorem{proposition}[theorem]{Proposition}

\title{Irreducibility of Random Polynomials}

\address{Department of Mathematics, University of Wisconsin - Madison, Madison, Wisconsin 53706}
\author[C. Borst]{Christian Borst}
\author[E. Boyd]{Evan Boyd}
\author[C. Brekken]{Claire Brekken}
\author[S. Solberg]{Samantha Solberg}
\author[M. M. Wood]{Melanie Matchett Wood}
\thanks{Christian Borst, Evan Boyd, Claire Brekken, and Samantha Solberg were supported by NSF grant DMS-1301690}
\thanks{Melanie Matchett Wood was supported by an American Institute of Mathematics Five-Year Fellowship, a Packard Fellowship for Science and Engineering, a Sloan Research Fellowship, and National Science Foundation grant DMS-1301690.}

\author[P. M. Wood]{Philip Matchett Wood}
\email{enboyd@wisc.edu}
\email{sgsolberg@wisc.edu}
\email{cborst@wisc.edu}
\email{cbrekken@wisc.edu}
\email{mmwood@math.wisc.edu}
\email{pmwood@math.wisc.edu}





\keywords{random integer polynomials, irreducible, low-degree factors}

\begin{document}

\maketitle

\begin{abstract}
We study the probability that a random polynomial with integer coefficients is reducible when factored over the rational numbers.  Using computer-generated data, we investigate a number of different models, including both monic and non-monic polynomials. Our data supports conjectures made by Odlyzko and Poonen and by Konyagin, and we formulate a universality heuristic and new conjectures that connect their work with Hilbert's Irreducibility Theorem and work of van der Waerden.
The data indicates that 
the probability that a random polynomial is reducible divided by the probability that there is a linear factor appears to approach a constant  and, in the large-degree limit, this constant appears to approach one.  In cases where the model makes it impossible for the random polynomial to have a linear factor, the probability of reducibility appears to be close to the probability of having a non-linear, low-degree factor. We also study characteristic polynomials of random matrices with $+1$ and $-1$ entries.
\end{abstract}

\section{Introduction}
Hilbert's Irreducibility Theorem states that a monic polynomial of degree $d$, where each coefficient is chosen uniformly and independently from integers in the interval $[-K,K]$, is irreducible over the integers with probability tending to one as $K$ goes to infinity. This statement of the theorem was proved by van der Waerden~\cite{van der} in 1934. In 1963, Chela \cite{chela} proved that the number of reducible (over the integers) polynomials of this form divided by $K^{d-1}$ approaches a constant as $K$ goes to infinity.  Chela's result \cite{chela} can be interpreted as proving that the probability of reducibility divided by the probability that the constant coefficient equals zero approaches a constant  as $K$ goes to infinity (see Theorem~\ref{Chela Theorem} and Figure~\ref{Low Chela}), and we believe that this interpretation is an example of a universal phenomenon. 

\begin{figure}[t]
    \includegraphics[width=\textwidth,height=.5\textheight,keepaspectratio]{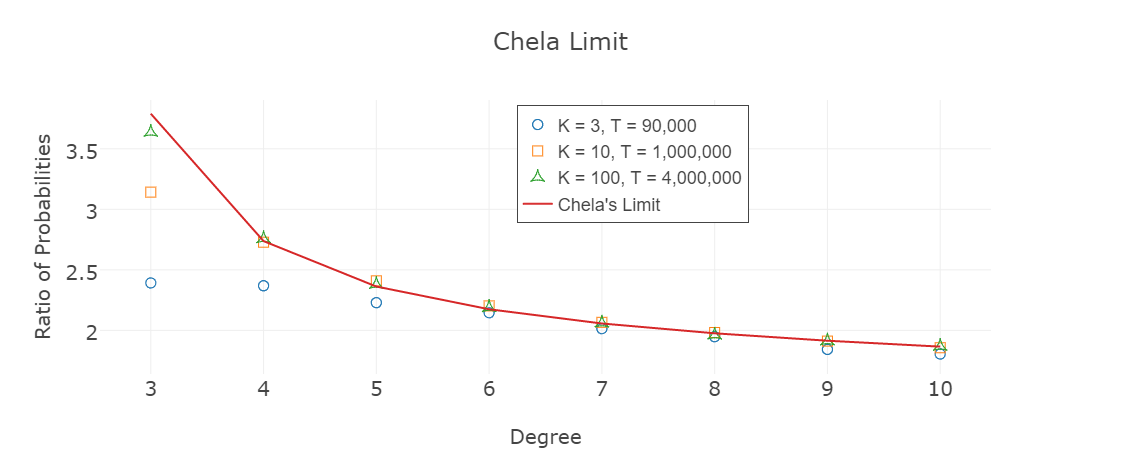}
    \caption{Let $h(x)$ be a degree $d$ monic polynomial with all other coefficients chosen independently and randomly from integers in the interval $[-K,K]$.  A result proven by Chela \cite{chela} (see Theorem \ref{Chela Theorem}) implies that the probability that $h(x)$ is reducible divided by the probability the probability that the constant coefficient is zero goes to a limit as $K$ goes to infinity. Above is a plot of data supporting this result when $K=3,10$ and $100$, where $T$ is the number of random trials trials to compute the respective data points. The data points are a ratio of the probability of reducibility divided by $\frac{1}{2K+1}$, which is the probability that the constant coefficient is zero. The figure indicates that convergence to the limit proven by Chela is relatively fast, especially for degree 6 and larger.
    The $99.9999\%$ confidence intervals are $\pm0.058$, $\pm0.053$, and $\pm0.251$ for $K = 3$, $10$, and $100$, respectively.}
    \label{Low Chela}
\end{figure}

\begin{heuristic}[Universality] \label{universality}
    Let $f(x)$ be a random polynomial with sufficiently well-behaved integer coefficients. Then the probability that $f(x)$ is reducible over the rationals divided by the probability that $f$ has a linear (or lowest possible degree) factor approaches a constant $C$ in the limit as the degree goes to infinity, or in the limit as the support of the random coefficients goes to infinity, or both.  Furthermore, in the limit as the degree goes to infinity, whether or not the support goes to infinity, the constant $C$ should equal 1.
\end{heuristic}

We study many polynomial models that appear to satisfy Heuristic~\ref{universality} (primarily with independent coefficients, though some with dependence, see Section~\ref{Matrix}), and the ``well-behaved'' condition is meant to exclude models with specific features that cause high-degree factors, for example, a random polynomial of degree $d$ formed as the product of two random  polynomials with degree around $d/2$.  Throughout, ``reducible'' will be used as shorthand for  ``reducible over the rationals.''  Note that for monic polynomials, reducibility over the rationals is equivalent to reducibility over the integers by Gauss's Lemma.  For non-monic polynomials, we are interested in cases where all factors have positive degree, so we factor over the rationals; for example, $2x^2+4 = 2(x^2+2)$ is irreducible over the rationals, because $2$ has a multiplicative inverse in the rationals.

O'Rourke and Wood~\cite{orourke} study a variety of models, proving that pointwise delocalization of the roots of a random polynomial implies that low-degree factors are very unlikely.  Konyagin~\cite{konyagin} studied random monic polynomials with 0 or 1 for coefficients and proved a bound on the probability of low-degree factors which he then used to prove a bound on the probability of reducibility.  Heuristic~\ref{universality} compliments these results on low-degree factors by suggesting that high-degree factors are even less likely than low-degree factors, and proving results in this direction is an interesting open problem.

Kuba \cite{kuba} studied non-monic polynomials with integer coefficients chosen independently in the interval $[-K,K]$ (conditioned on the lead coefficient being nonzero), and proved a result implying that the 
probability of reducibility divided by the probability that the constant term is zero is bounded by a constant. This can be viewed as a step towards proving a version of Chela's~\cite{chela} result (and a case of Heuristic~\ref{universality}) for non-monic polynomials. In Section~\ref{Growing Support with Fixed Degree}, we will test generalizations of Hilbert's Irreducibility Theorem, studying polynomials with fixed degree in the limit as the support for the coefficients grows, and we will state special cases of Heuristic~\ref{universality} as conjectures.

In Hilbert's Irreducibility Theorem, the degree is fixed and the support of the coefficients is growing, and we believe that Heuristic~\ref{universality} also holds for cases where the range of support of the coefficients is fixed and the degree goes to infinity. For example, let $g_{\{0,1\},d}(x)$ be a monic degree $d$ polynomial with constant coefficient equal to 1 and with every other coefficient equal to 0 or 1 independently with probability $\frac{1}{2}$. In 1993, Odlyzko and Poonen~\cite{odlyzko} conjectured that the probability of irreducibility for these polynomials approaches 1 as the degree $d$ approaches infinity. Konyagin~\cite{konyagin} then conjectured in 1999 that the probability that $x + 1$ is a factor of $g_{\{0,1\},d}(x)$, given that it is reducible, approaches 1 as $d$ approaches infinity, which can be viewed as a case of Heuristic~\ref{universality} (see Figure~\ref{Z1 Low}).

\begin{figure}
    \includegraphics[width=\textwidth,height=.5\textheight,keepaspectratio]{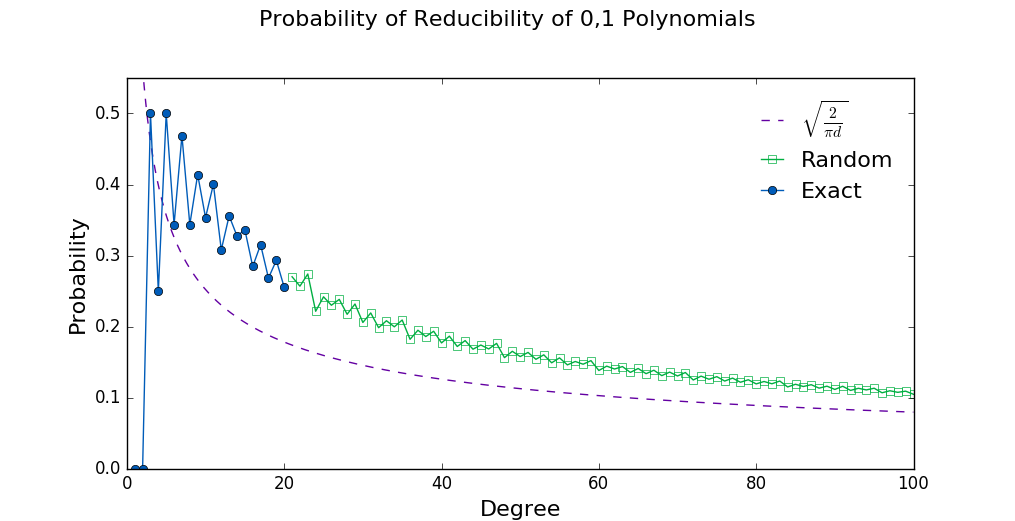}
    \caption{The plot above shows the probability of reducibility for monic $0,1$ polynomials, where the lead and constant coefficients are 1 and all other coefficients are 0 or 1 independently with probability $1/2$ (note this figure also appears in \cite[Figure~1]{orourke}). The exact points (circles) were computed by exhaustive generation of all $0,1$ polynomials of degree up to 20. The random points (squares) were generated with 1,000,000 random trials, giving a 99.9999\% confidence interval of  $\pm 0.0025$. The 
    curve (dashed) is an asymptotic lower bound for the probability of reducibility derived from the probability that $x-1$ is a factor of the polynomial (see Proposition~\ref{P1}).  The data shows that the probability of reducibility approaches the probability that $x-1$ is a factor, supporting Konyagin's conjecture \cite{konyagin} and Heuristic~\ref{universality} (see also Figure~\ref{Z1 High}).}
    \label{Z1 Low}
\end{figure}

We consider the following related model: Let $g_{\pm1,d}(x)$ be a monic polynomial with degree $d$ with all other coefficients $+1$ or $-1$ independently with probability $\frac{1}{2}$. In Conjecture~\ref{op}, we extend Odlyzko and Poonen's conjecture to these polynomials as well. A version of Konyagin's conjecture appears to apply to odd-degree polynomials $g_{\pm1,d}(x)$, where having a linear factor appears to be the most common way for such a polynomial to factor---see Figure~\ref{PM1 Red} and Conjecture~\ref{PM1 Linear}. Even-degree polynomials $g_{\pm1,d}(x)$ cannot have a linear factor, but these polynomials may still support a variant of Konyagin's conjecture where having a quadratic or other low-degree factor appears to be the most common way for such a polynomial to factor---see Lemma~\ref{lemma1}. Also, O'Rourke and Wood \cite{orourke} prove that all polynomials of this form have a vanishingly small probability of having a factor with any fixed degree, including linear factors, as $d$ goes to infinity. Data for both polynomials $g_{\{0,1\},d}(x)$ and $g_{\pm1,d}(x)$ supports these conjectures and our Heuristic~\ref{universality} (see Section~\ref{Fixed Support with Growing Degree}).

\begin{figure}
    \includegraphics[width=\textwidth,height=.5\textheight,keepaspectratio]{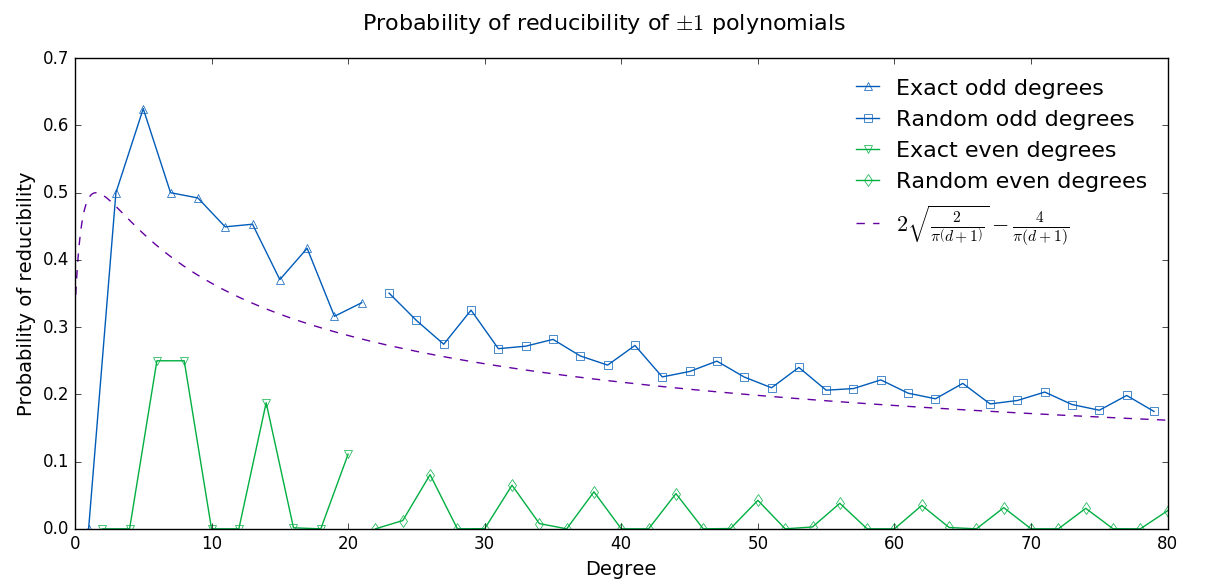}
    \caption{The plot above shows probability of reducibility for monic polynomials where each coefficient is $+1$ or $-1$ independently with probability $1/2$ (note that this figure also appears in \cite[Figure~2]{orourke}). The random data was calculated using 150,000,000 trials, giving a 99.9999\% confidence interval of 0.0002 for every value. The lower bound on the probability of reducibility for odd degrees (dashed) is the asymptotic probability that a degree $d$ polynomial has a linear factor (see Proposition~\ref{P4}).  If the degree is even, the polynomials cannot have a linear factor, and the reducibility probability appears to be distinctly lower.  For even degrees, a Galois theory argument (\cite[Subsection~2.2]{orourke}) proves that the reducibility probability is exactly zero whenever $d+1$ is prime and 2 generates the multiplicative group of $\left(\mathbb Z/(d+1)\right)^\times$, which should happen for infinitely many $d$ by Artin's Conjecture (see, for example, \cite{Moree}).}
    \label{PM1 Red}
\end{figure}

Random polynomials with coefficients $1$ or $-1$ have also been studied by Peled, Sen, and Zeitouni \cite{PSZeitouni2016}, and they prove that the probability of a double root is asymptotically equal to the probability that either $-1$ or $1$ is a double root (and they further extend this result to polynomials with coefficients $1$, $0$, or $-1$).  This result is related to Section~\ref{Fixed Support with Growing Degree} where we conjecture for random polynomials with $1$ or $-1$ coefficients that, among reducible polynomials, the probability that $x+1$ or $x-1$ is a factor tends to 1 (see Conjecture~\ref{PM1 Linear}).  Very roughly, both Peled, Sen, and Zeitouni's result \cite{PSZeitouni2016} and Conjecture~\ref{PM1 Linear} suggest that the coincidence of having a double root or the coincidence of factoring are most likely to happen in the simplest way, and that is when $1$ or $-1$ is a root.

Random polynomials of the sort considered in Hilbert's Irreducibility Theorem have been studied in probabilistic Galois theory. For example, van der Waerden~\cite{van der} proved that if $h_{[-K,K],d}(x)$ is monic with degree $d$ and all of its coefficients are chosen independently and uniformly from integers in the interval $[-K,K]$, then the probability that the Galois group for $h_{[-K,K],d}(x)$ is $S_d$, the symmetric group of $d$ elements, tends to 1 as $K$ tends to infinity. Work proving successively better upper bounds on the probability that $h_{[-K,K],d}(x)$ does not have Galois group $S_d$ has continued with the work of Knobloch in 1955 \cite{knobloch1} and 1956 \cite{knobloch2}, Gallagher in 1973 \cite{gallagher}, Zywina in 2010 \cite{zywina}, Dietmann in 2013 \cite{dietmann}, and finally Rivin \cite{rivin} in 2015, who proved the upper bound $\frac{\log^{c}{K}}{K}$, where $c=c_d$ is a constant depending on $d$, which is the first upper bound demonstrating that the probability decreases linearly in $K$, up to a polylog factor.

For this paper, computer simulations were used to randomly measure data points in the real numbers whose exact values are unknown, and we quantify the likely difference between our measured data points and the actual values using confidence intervals.  For example, we say a randomly measured data value of $x$ has a $99.9999\%$ (five standard deviations) confidence interval of $\pm \epsilon$ to mean that, at the start of the random experiment, there is at most a one-in-a-million chance (i.e., probability $0.000001$) that the measured value will differ from the actual value by more than $\epsilon$. 
The data for this paper was collected using HTCondor~\cite{basney,litzkow,tannenbaum,thain}, a high-throughput computing software system developed and maintained at the University of Wisconsin-Madison, and programming was done primarily in Magma.

\begin{figure}
    \includegraphics[width=\textwidth,height=.5\textheight,keepaspectratio]{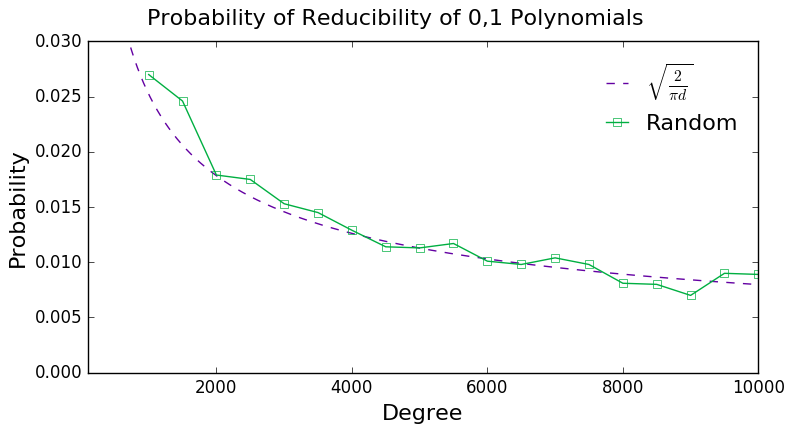}
    \caption{The probability of reducibility for $0,1$ polynomials of high degree. The random data (squares) was generated with 10,000 trials, a relatively small number necessitated by the high degree, giving a 95\% (2 standard deviations) confidence interval of $\pm 0.010$, which is relatively large for this figure. The asymptotic lower bound (dashed) is the same bound as in Figure~\ref{Z1 Low}, and we believe some points fall below this asymptotic lower bound due to measurement error, which would be corrected if substantially more trials were conducted.}
    \label{Z1 High}
\end{figure}

The paper is organized as follows. Section~\ref{Fixed Support with Growing Degree} studies examples of monic polynomials where the support of coefficients is fixed and the degree is increasing, including coefficients either $0$ or $1$ and coefficients either $1$ or $-1$, where each coefficient is independently chosen with probability $\frac{1}{2}$.  Section~\ref{Growing Support with Fixed Degree} gives examples of random monic and non-monic polynomials where the degree is fixed and the range of support of the coefficients is growing.  This includes coefficients chosen uniformly from integers in the interval $[-K,K]$ and other distributions on integers in the interval $[-K,K]$.  Section~\ref{Matrix} studies characteristic polynomials of random $d$ by $d$ matrices where each entry is $+1$ or $-1$ independently with probability $1/2$, a case where where both the degree of the polynomial and the support of the coefficients tend to infinity as $d$ increases.  Propositions, lemmas, and proofs appear in Section~\ref{Asymptotic Lower Bounds}.

\section{Fixed Support with Growing Degree} \label{Fixed Support with Growing Degree}
\subsection*{Zero-One Polynomials}

Let $g_{\{0,1\},d}(x)= x^d+ a_{d-1}x^{d-1} + \dots + a_1 x + 1$ be a random monic polynomial with degree $d$ and constant coefficient 1, where every other coefficient $a_1,\dots, a_{d-1}$ is 0 or 1 independently with probability $1/2$.  As shown in Proposition~\ref{P1}, the probability that $g_{\{0,1\},d}(x)$ has $x + 1$ as a factor is asymptotically $\sqrt{\frac{2}{\pi d}}$, which is a lower bound for the probability of reducibility. Our data suggests that as the degree increases, the probability of reducibility converges to this lower bound (see Figures~\ref{Z1 Low} and \ref{Z1 High}).

Consider a random monic polynomial $h_{\{0,1\},d}(x)$ with the same form as $g_{\{0,1\},d}(x)$ above, but with the constant term also allowed to be 0 or 1 independently with probability $1/2$. One could ask: For polynomials $h_{\{0,1\},d}(x)$, does the probability of reducibility divided by the probability that the constant term is 0 go to 1 as the degree $d$ increases? The answer to this question is yes if and only if the probability that the polynomials $g_{\{0,1\},d}(x)$ are irreducible goes to 1 as the degree $d$ goes to infinity. The equivalence is true because the probability of reducibility for polynomials $h_{\{0,1\},d}(x)$ is equal to $1/2$ (the probability that the constant coefficient is zero) plus the probability of reducibility for polynomials $g_{\{0,1\},d}(x)$.
 Thus, Heuristic~\ref{universality} for $h_{\{0,1\},d}(x)$ is equivalent to Odlyzko-Poonen's conjecture \cite{odlyzko} that $g_{\{0,1\},d}(x)$ becomes irreducible with probability tending to 1 as $d$ goes to infinity.

\subsection*{Rademacher $\bm{\pm1}$ Polynomials}

\begin{figure}
    \includegraphics[width=\textwidth,height=.5\textheight,keepaspectratio]{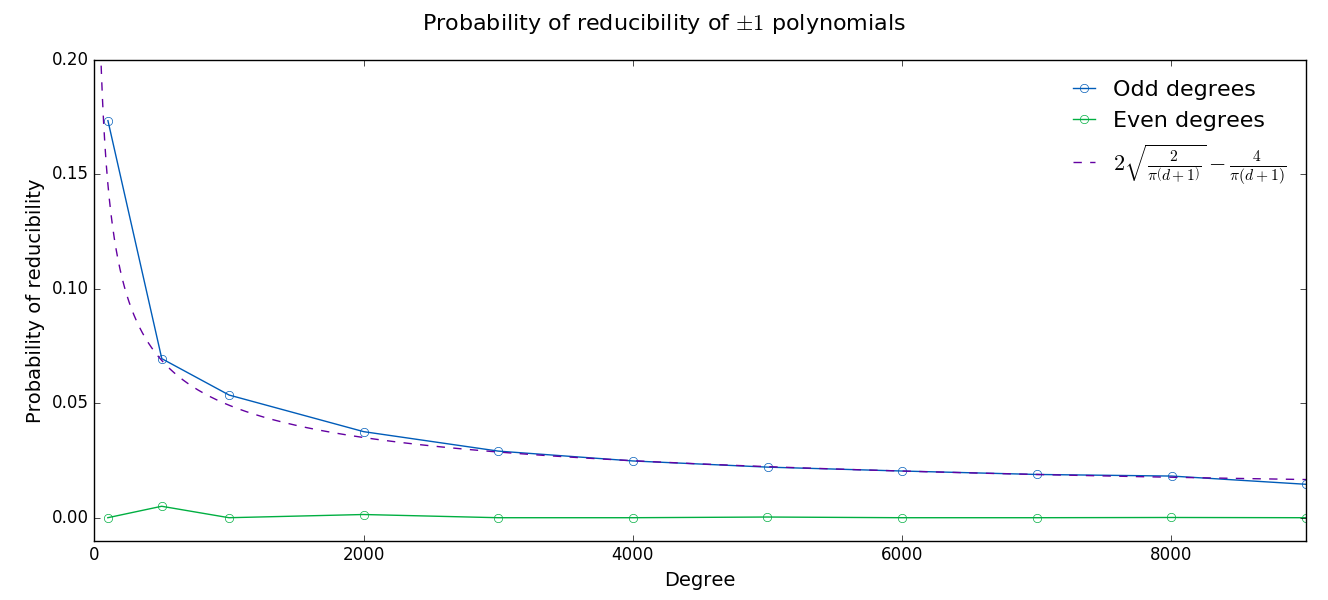}
    \caption{The probability of reducibility for $\pm 1$ polynomials with high degree. The data was calculated randomly using 10,000 trials, giving 95\% (2 standard deviations) confidence interval of $\pm 0.01$.
    The lower bound for odd-degrees is the same as the lower bound in Figure~\ref{PM1 Red}.}
    \label{PM1 Large}
\end{figure}

Let $g_{\pm1,d}(x)= x^d + a_{d-1}x^{d-1} + \dots + a_0$ be a monic, degree $d$ polynomial where each coefficient $a_0,a_1,\dots,a_{d-1}$ is either 1 or $-1$ independently with probability $\frac{1}{2}$.
 In Figure~\ref{PM1 Red} and Figure~\ref{PM1 Large}, we see that the probability of reducibility of $g_{\pm1,d}(x)$ polynomials decreases as the degree $d$ goes to infinity. 
For odd-degree $d$, we prove that 
$2 \sqrt{\frac{2}{\pi(d+1)}} - \frac{4}{\pi(d+1)}$ is an asymptotic lower bound, based on the probability that $x+1$ or $x-1$ is a factor (see Lemma~\ref{lemma1} and Proposition~\ref{P4}).  For even-degree $d$, a linear factor over the integers is impossible (see Lemma~\ref{lemma1}).  Figure~\ref{PM1 Red} and Figure~\ref{PM1 Large} suggest that, when the degree is odd, the probability that $g_{\pm1,d}(x)$ is reducible approaches the probability that $g_{\pm1,d}(x)$ has a linear factor, supporting Heuristic~\ref{universality} and suggesting the following conjecture, which is analogous to Konyagin's conjecture in \cite{konyagin}.

\begin{conjecture}\label{PM1 Linear}
    Let $d$ be an odd positive integer.  For random monic polynomials $g_{\pm1,d}(x) = x^d + a_{d-1}x^{d-1} + \dots + a_0,$ where the $a_i$ are $+1$ or $-1$ independently with probability $1/2$, the probability that $g_{\pm1,d}(x)$ has a linear factor $x + 1$ or $x - 1$, conditioned on $g_{\pm1,d}(x)$ being reducible, goes to 1 as the degree $d$ goes to infinity.
\end{conjecture}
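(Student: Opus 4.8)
The plan is to reformulate the conjecture as a quantitative comparison. By Proposition~\ref{P4}, for odd $d$ the probability that $g_{\pm1,d}(x)$ has a linear factor is asymptotic to $2\sqrt{\tfrac{2}{\pi(d+1)}}-\tfrac{4}{\pi(d+1)}=\Theta(d^{-1/2})$, and since having a linear factor implies reducibility, this is also a lower bound for the probability of reducibility. Hence, writing $R_d$ for the probability that $g_{\pm1,d}(x)$ is reducible but has no linear factor, it suffices to establish the bound $R_d=o(d^{-1/2})$. By Gauss's Lemma we may take all factors monic over $\mathbb{Z}$; the smallest-degree irreducible factor of a reducible polynomial with no linear factor then has degree $k$ with $2\le k\le\lfloor d/2\rfloor$, so if $p_k$ denotes the probability that $g_{\pm1,d}(x)$ has a monic integer factor of degree exactly $k$, the task becomes proving $\sum_{k=2}^{\lfloor d/2\rfloor}p_k=o(d^{-1/2})$.

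First I would treat bounded $k$. Two structural constraints sharply restrict the possible factors: every $\pm1$ polynomial reduces modulo $2$ to the fixed polynomial $\bar f=(x^{d+1}-1)/(x-1)$, so any factor $g$ must satisfy $\bar g\mid\bar f$; and since the Mahler measure is multiplicative with $M(g)\le M(f)\le\|f\|_2=\sqrt{d+1}$ by Landau's inequality, every factor has Mahler measure at most $\sqrt{d+1}$. For fixed $k$, the factors of smallest Mahler measure are cyclotomic (Kronecker's theorem), and divisibility by a fixed cyclotomic polynomial $\Phi_n$ with $\phi(n)=k$ forces $k$ integer linear combinations of the coefficients $a_i$, each an anti-concentrated sum of $\Theta(d)$ signs, to vanish simultaneously; an Erd\H{o}s--Littlewood--Offord estimate (via a local central limit theorem for the resulting lattice vector) then gives a contribution of order $O(d^{-k/2})$. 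For the remaining non-cyclotomic factors of bounded degree, at least one root lies off the unit circle, where anti-concentration is non-degenerate, and the delocalization bounds of O'Rourke and Wood \cite{orourke} apply. Together these yield $p_k=o(d^{-1/2})$ for each fixed $k\ge2$, and hence $\sum_{k\le K_0}p_k=o(d^{-1/2})$ for any fixed cutoff $K_0$.

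The hard part will be the range $K_0<k\le\lfloor d/2\rfloor$, and this is exactly what keeps the statement a conjecture rather than a theorem. The naive union bound $p_k\le N_k\cdot\max_g P(g\mid f)$, where $N_k$ counts candidate factors of degree $k$ with $\bar g\mid\bar f$ and $M(g)\le\sqrt{d+1}$, is useless here: by the Chern--Vaaler counting estimates $N_k$ grows like a power of $\sqrt{d+1}$, while the per-factor anti-concentration bound only gives $P(g\mid f)\lesssim d^{-k/2}$, so their product does not even tend to $0$. The difficulty is compounded by the fact that the anti-concentration estimate degenerates precisely because the roots of every admissible factor cluster near the unit circle $|z|=1$. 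Making progress in this regime seems to require either a genuinely sharp bound on the number of integer polynomials of degree $k$ and Mahler measure $\le\sqrt{d+1}$ whose reduction divides $\bar f$, coupled with an anti-concentration inequality that remains effective for roots approaching $|z|=1$, or a more global method that exploits the correlations between the divisibility events---for instance a second-moment or entropy argument on the joint distribution of $f$ modulo several small primes---so as to rule out the balanced factorizations $f=gh$ with $\deg g\approx d/2$ that dominate this sum.
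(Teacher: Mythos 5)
The statement you are asked about is a \emph{conjecture}: the paper does not prove it, and offers only numerical evidence (Figures~\ref{PM1 Red} and \ref{PM1 Large}) together with the lower bound of Proposition~\ref{P4} on the probability of a linear factor. Your reduction of the conjecture to the estimate $R_d=o(d^{-1/2})$, where $R_d$ is the probability of being reducible with no linear factor, is correct and is the natural quantitative reformulation; your treatment of fixed-degree factors via the mod-$2$ constraint, Mahler measure, and anti-concentration is consistent with what is actually known (the results of O'Rourke and Wood \cite{orourke} on low-degree factors, which the paper cites for exactly this purpose).

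However, what you have written is not a proof, and you say so yourself: the range of factor degrees $K_0<k\le\lfloor d/2\rfloor$ is left entirely open, and the union bound you correctly diagnose as insufficient is the only tool you propose there. That missing step is not a technicality --- it is the entire content of the conjecture. The paper itself frames the situation this way: bounds on low-degree factors are known, and ``proving results in this direction [that high-degree factors are even less likely] is an interesting open problem.'' So your proposal should not be read as a proof attempt with a fixable gap but as an accurate account of why the statement remains a conjecture; if you intend to submit it as a proof, the answer is that it fails at precisely the point you flag, and no argument in the paper fills that hole either.
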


The data also suggests the following analog of Odlyzko and Poonen's~\cite{odlyzko} conjecture.

\begin{conjecture}\label{op}
 Let $d$ be a  positive integer.  For random monic polynomials $g_{\pm1,d}(x) = x^d + a_{d-1}x^{d-1} + \dots + a_0,$ where the $a_i$ are $+1$ or $-1$ independently with probability $1/2$, the probability that $g_{\pm1,d}(x)$ is reducible goes to 0 as the degree $d$ goes to infinity.
\end{conjecture}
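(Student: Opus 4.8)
The plan is to bound the reducibility probability by the sum, over the degree $k\in\{1,\dots,\lfloor d/2\rfloor\}$ of a hypothetical monic integer factor, of the probability that $g_{\pm1,d}$ admits a factor of degree $k$, and to show that this sum tends to $0$. I would split the range of $k$ into three regimes handled by different tools. For $k=1$, the rational root theorem shows that the only possible linear factors are $x\pm1$, so this contribution equals $\mathbb{P}(g_{\pm1,d}(1)=0)+\mathbb{P}(g_{\pm1,d}(-1)=0)$; by the local central limit theorem for the $\pm1$ random walk this is asymptotic to $2\sqrt{2/(\pi(d+1))}$ (cf.\ Lemma~\ref{lemma1} and Proposition~\ref{P4}), which already tends to $0$ and is the conjectured dominant term. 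For the bounded-degree regime $2\le k\le R$, with $R=R(d)$ a slowly growing cutoff, I would invoke the pointwise delocalization results of O'Rourke and Wood~\cite{orourke}, which give that for each fixed $k$ the probability of a degree-$k$ factor tends to $0$; to let $R\to\infty$ one needs these bounds to be uniform in $k$, of the summable form $c^{-k}$ or $k^{O(1)}/\sqrt d$.

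The heart of the problem is the high-degree regime $R<k\le d/2$, and here I would use reduction modulo primes in the style of van der Waerden's proof that the generic Galois group is $S_d$. If $g_{\pm1,d}=fh$ over $\mathbb{Z}$ with $\deg f=k$, then for every prime $p$ of good reduction, $\bar f\mid\bar g$ in $\mathbb{F}_p[x]$ forces $k$ to be a subset sum of the multiset of degrees of the irreducible factors of $g_{\pm1,d}\bmod p$; writing $S_p\subseteq\{0,1,\dots,d\}$ for the set of such achievable subset sums, reducibility implies $\bigcap_p S_p\not\subseteq\{0,d\}$. I would therefore fix a finite set $\mathcal{P}$ of odd primes and prove that, with probability tending to $1$, the factorization types of $g_{\pm1,d}\bmod p$ for $p\in\mathcal{P}$ are jointly generic enough that $\bigcap_{p\in\mathcal{P}}S_p=\{0,d\}$; for instance, two primes each producing two irreducible factors of incompatible degrees already force irreducibility. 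The essential analytic input is that, as $d\to\infty$, the joint distribution of the factorization types of $g_{\pm1,d}\bmod p$ across $p\in\mathcal{P}$ approaches that of independent, uniformly random monic degree-$d$ polynomials over the fields $\mathbb{F}_p$, so that these generic configurations occur with probability bounded below. This reduces to an equidistribution statement for the coefficient vector $(a_i\bmod p)_{p\in\mathcal{P}}$, which I would attack by Fourier analysis and character-sum bounds, exploiting anti-concentration of the sums $\sum_i a_i\alpha^i$ as $\alpha$ ranges over the relevant finite fields. As a complementary tool one may use that the roots of a $\pm1$ polynomial lie in a fixed annulus about the unit circle, which bounds the coefficients of any factor and limits the number of candidate factors of each degree.

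The main obstacle is precisely this equidistribution step under the severe restriction $a_i\in\{+1,-1\}$. Unlike the growing-support models governed by Hilbert's theorem, here $g_{\pm1,d}\bmod p$ ranges over only the $2^d$ reductions of $\pm1$-polynomials rather than all $p^d$ polynomials; modulo $2$ in particular it is the fixed polynomial $1+x+\dots+x^d$ and contributes no randomness (the source of the even/odd and Artin-conjecture phenomena noted after Figure~\ref{PM1 Red}), so one must work with odd primes and show that the $\pm1$ constraint still mixes the factorization type enough for the subset-sum sets $S_p$ to intersect trivially. Making the character-sum estimates strong enough to control the \emph{joint} behavior across several primes, while simultaneously securing the uniform-in-$k$ bound needed to sum the bounded-degree contributions up to a growing cutoff $R$ so that the three regimes together cover all $k\le d/2$, is the crux; it is exactly the point at which the analogous Odlyzko--Poonen conjecture~\cite{odlyzko} for $\{0,1\}$ polynomials, and Konyagin's work~\cite{konyagin}, remain open.
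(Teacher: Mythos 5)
There is no proof to compare against: the statement you are addressing is stated in the paper as Conjecture~\ref{op}, an analogue of the Odlyzko--Poonen conjecture, and the paper offers only numerical evidence for it (together with the matching lower bound $\asymp\sqrt{2/(\pi d)}$ from linear factors in Proposition~\ref{P4}). Your submission is accordingly a research program rather than a proof, and you say as much in your final paragraph; the high-degree regime you identify as ``the heart of the problem'' is exactly the part that is open, so the proposal cannot be accepted as establishing the conjecture.

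Beyond the honest acknowledgement of the gap, there is a concrete flaw in the sieve step as you set it up. You propose to fix a \emph{finite} set $\mathcal{P}$ of odd primes and show that with probability tending to $1$ the factorization types mod $p\in\mathcal{P}$ force $\bigcap_p S_p=\{0,d\}$. This fails even in the idealized situation where the reductions behave like independent uniformly random monic polynomials over the fields $\mathbb{F}_p$: for fixed $p$ the probability that a random degree-$d$ polynomial over $\mathbb{F}_p$ has a root (hence $1\in S_p$) tends to $1-(1-1/p)^p>0$ as $d\to\infty$, so for a fixed finite $\mathcal{P}$ the event $1\in\bigcap_p S_p$ has probability bounded away from $0$, and the sieve leaves a positive-probability exceptional set no matter how generic the joint behavior is. A fixed finite $\mathcal{P}$ can therefore yield at best an upper bound $\Pr(\text{reducible})\le c(\mathcal{P})$ with $c(\mathcal{P})>0$, not $o(1)$. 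To conclude one must let $|\mathcal{P}|\to\infty$ with $d$, and then the required joint equidistribution of the $2^d$ coefficient vectors among residues modulo $\prod_{p\in\mathcal{P}}p$ (equivalently, anti-concentration of $\sum_i a_i\alpha^i$ uniformly over many $\alpha$ in growing fields) is precisely the unresolved difficulty; no character-sum estimate currently known closes it unconditionally for $\pm1$ coefficients. The low-degree regimes are in better shape --- the linear-factor count is Proposition~\ref{P4}, and \cite{orourke} does give bounds on fixed-degree factors --- but even there you would need to verify that the bounds are uniform and summable up to your cutoff $R(d)$, which you flag but do not supply. In short: the decomposition is the natural one and matches the strategy of subsequent work on this problem, but every step that goes beyond what Proposition~\ref{P4}, Lemma~\ref{lemma1}, and \cite{orourke} already provide remains unproved, so the conjecture stands as a conjecture.
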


 



\begin{figure}
    \includegraphics[width=\textwidth,height=.6\textheight,keepaspectratio]{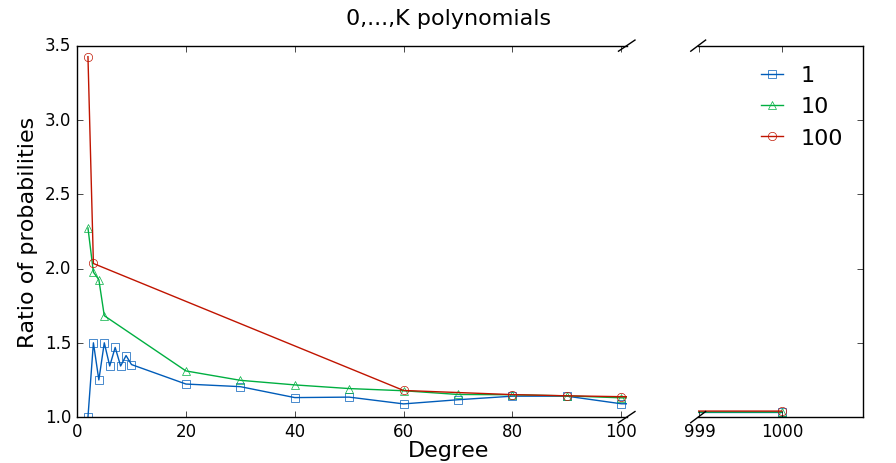}
    \caption{Consider a random monic polynomial where each coefficient is sampled uniformly from integers in the interval $[0,K]$.  Above is a plot of data measuring the probability that such a polynomial is reducible divided by the probability that the constant term is 0, which is $\frac{1}{K + 1}$. The ratio appears to tend to 1 as the degree increases, supporting Heuristic~\ref{universality}. Each data point was generated with $2500K^2$ trials, giving 99.9999\% confidence intervals of $\pm 0.1$ , $\pm 0.055$ and $\pm 0.0505$, respectively, for $K=1$, 10, and $100$.}
    \label{ZeroK}
\end{figure}

\section{Growing Support with Fixed Degree} \label{Growing Support with Fixed Degree}
\subsection*{Uniform integers in the interval $[-K,K]$}
Chela~\cite{chela} proved the theorem below, which we have rephrased in terms of a ratio of probabilities to show its connection with Heuristic~\ref{universality}.

\begin{theorem}[Chela~\cite{chela}]\label{Chela Theorem}
Let $h=h_{[-K,K],d}(x) =x^d + a_{d-1}x^{d-1} + \dots + a_1x + a_0$ be a polynomial with degree $d> 2$, where the $a_i$ are chosen uniformly and independently from integers in the interval $[-K,K]$. As $K$ goes to infinity, the probability of reducibility divided by the probability that the constant coefficient is zero goes to the limit $C_d$, i.e., 
\begin{equation}\label{e:chela}
\lim_{K\to\infty} \frac{\Pr(h(x) \mbox{ is reducible})}{ \frac{1}{2K+1} } = C_d :=  2 \zeta(d-1) -1  + \frac{k_{d}}{2^{d-2}},
\end{equation}
where $k_d = \int \int \dots \int \,dx_1\, dx_2 \,\dots\, dx_{d-1}$, with the iterated integral running over all $x_i$ satisfying $|x_i| \le 1\mbox{ for } 1 \le i \le d-1$ and $\left| x_1 + x_2 + \dots +x_{d-1} \right| \le 1$.
\end{theorem}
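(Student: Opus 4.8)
The plan is to translate the statement into an asymptotic count. Since each $a_i$ ranges over the $2K+1$ integers in $[-K,K]$, the probability in~\eqref{e:chela} divided by $\tfrac{1}{2K+1}$ equals $N(K)/(2K+1)^{d-1}$, where $N(K)$ is the number of reducible monic $h$ with coefficients in $[-K,K]$, so it suffices to show $N(K) = C_d(2K+1)^{d-1} + o(K^{d-1})$. The first step is to reduce to linear factors. By Gauss's Lemma a monic $h$ is reducible over $\mathbb{Q}$ iff it factors into monic integer polynomials, and a monic integer polynomial has a linear factor iff it has an integer root. I would first argue that the reducible polynomials \emph{without} a linear factor --- those whose smallest-degree factor is an irreducible quadratic or larger --- number only $O(K^{d-2})$: for a fixed monic integer factor $q$ of degree $e\ge 2$, the cofactor $g=h/q$ of degree $d-e$ is confined to a bounded $(d-e)$-dimensional region, giving $O(K^{d-e})=O(K^{d-2})$ polynomials, and the sum over all relevant $q$ converges when $d>2$. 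Hence $N(K)=M(K)+O(K^{d-2})$, where $M(K)=\#\{h : h(r)=0\text{ for some }r\in\mathbb{Z}\}$.

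The heart of the argument is to count, for each fixed integer $r$, the number $N_r(K)$ of monic $h$ with $h(r)=0$ and coefficients in $[-K,K]$, and to show $N_r(K)=V_rK^{d-1}+o(K^{d-1})$ for an explicit volume $V_r$. Writing $h=(x-r)g$ with $g=x^{d-1}+b_{d-2}x^{d-2}+\cdots+b_0$ monic, the coefficients of $h$ are the synthetic-division expressions $a_i=b_{i-1}-rb_i$ (with $b_{d-1}=1$, $b_{-1}=0$), so $N_r(K)$ counts lattice points $(b_0,\dots,b_{d-2})$ in the convex region cut out by $|a_i|\le K$; rescaling $b_i=K\beta_i$ identifies $V_r$ with the volume of that region. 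Three cases arise. The case $r=0$ is the one most easily overlooked, yet it supplies a full-order term: $h(0)=0$ just means $a_0=0$, so $N_0(K)=(2K+1)^{d-1}$ and $V_0=2^{d-1}$. For $r=\pm1$ I would instead work directly: solving $h(\pm1)=0$ for $a_0$ leaves the single nontrivial constraint $|a_0|\le K$, which after rescaling $a_i=Kx_i$ becomes $\bigl|\sum_{i=1}^{d-1}(\pm1)^ix_i\bigr|\le 1$ on $[-1,1]^{d-1}$, whose volume is exactly $k_d$ (for $r=-1$ after the measure-preserving change $x_i\mapsto(-1)^ix_i$). For $|r|\ge 2$, the change of variables $u_0=r\beta_0$, $u_i=\beta_{i-1}-r\beta_i$ has Jacobian $|r|^{d-1}$, and one checks that the surviving constraint $|\beta_{d-2}|\le 1$ is automatic, since $|\beta_{d-2}|\le\sum_{k=1}^{d-1}|r|^{-k}=\frac{1-|r|^{-(d-1)}}{|r|-1}<1$; hence $V_r=|r|^{-(d-1)}\,\mathrm{vol}\bigl([-1,1]^{d-1}\bigr)=(2/|r|)^{d-1}$.

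To finish, I would assemble the pieces. Inclusion--exclusion gives $M(K)=\sum_{r\in\mathbb{Z}}N_r(K)-(\text{corrections for two or more integer roots})$, and each correction counts polynomials divisible by a fixed product $(x-r)(x-r')$, hence is $O(K^{d-2})$; summing these over $r,r'$ again converges for $d>2$, so $M(K)=\sum_rN_r(K)+O(K^{d-2})$. Passing to the limit (justified by a uniform bound $N_r(K)\le C(2/|r|)^{d-1}K^{d-1}$ from convexity, together with Cauchy's root bound $|r|\le K+1$, which makes the sum finite for each $K$) yields
\[
\lim_{K\to\infty}\frac{M(K)}{K^{d-1}}=\sum_{r\in\mathbb{Z}}V_r=2^{d-1}+2k_d+2^{d-1}\!\!\sum_{|r|\ge 2}|r|^{-(d-1)}=2^{d-1}+2k_d+2^{d}\bigl(\zeta(d-1)-1\bigr).
\]
Dividing by $(2K+1)^{d-1}\sim 2^{d-1}K^{d-1}$ converts this to $1+\frac{k_d}{2^{d-2}}+2\bigl(\zeta(d-1)-1\bigr)=C_d$, as claimed. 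The main obstacle is the first reduction: rigorously proving that the reducible polynomials with no linear factor (and, likewise, the multiple-root corrections) are genuinely $o(K^{d-1})$ requires uniform control of the cofactor counts and convergence of the associated sums over all factors $q$, and it is precisely here that the hypothesis $d>2$ enters, mirroring the convergence of $\zeta(d-1)$.
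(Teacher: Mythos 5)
The paper does not actually prove Theorem~\ref{Chela Theorem}: it is quoted from Chela's 1963 paper (rephrased as a ratio of probabilities), so there is no in-paper argument to compare against. Your proposal is, in essence, a correct reconstruction of the standard proof: reduce to polynomials with an integer root, compute for each root $r$ the density $V_r$ of the corresponding coefficient polytope ($V_0=2^{d-1}$, $V_{\pm1}=k_d$, $V_r=(2/|r|)^{d-1}$ for $|r|\ge 2$), and sum; the normalization by $(2K+1)^{d-1}\sim 2^{d-1}K^{d-1}$ does produce exactly $2\zeta(d-1)-1+k_d/2^{d-2}$, and your Jacobian computation and the verification that $|\beta_{d-2}|<1$ is automatic for $|r|\ge 2$ are both right.

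Two places in the write-up are looser than they should be, though neither is fatal. First, in the reduction to linear factors you need a height-multiplicativity inequality (Gelfond's or Mahler's bound, $H(q)H(g)\ll_d H(qg)$) to confine the cofactor $g$ to a box of side $O(K/H(q))$; without it the sum over $q$ has no reason to converge. Even with it, the balanced case $\deg q=d/2$ gives $\sum_A A^{2e-d-1}=\sum_A A^{-1}$, which diverges logarithmically, so the correct bound there is $O(K^{d/2}\log K)$ rather than a convergent sum --- still $o(K^{d-1})$ for $d\ge 3$, which is all you need. The same caveat applies to the two-root corrections when $d=3$, where the sum over pairs $(r,r')$ yields $O(K\log^2K)=o(K^2)$ rather than $O(K)$. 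Second, the ``uniform bound from convexity'' $N_r(K)\le C(2/|r|)^{d-1}K^{d-1}$ is not a consequence of volume alone (thin convex bodies can contain disproportionately many lattice points); the clean route is to count coordinates successively, giving $N_r(K)\le(2K/|r|+1)^{d-1}$, which together with Cauchy's bound $|r|\le K+1$ justifies the interchange of limit and sum by dominated convergence. With these repairs the argument is complete and matches Chela's.
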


We generated data for polynomials $h_{[-K,K],d}(x)$ to find the probability of reducibility for varying K values---see Figure~\ref{Low Chela}. It is interesting to note that our data fits closely with Chela's limit for small $K$ values, especially when $d$ increases. Also note that the right-hand side of \eqref{e:chela} in Theorem~\ref{Chela Theorem} approaches $1$ as $d$ goes to infinity, supporting Heuristic~\ref{universality}.

\subsection*{Uniform integers in the interval $[0,K]$}
We consider the probability of reducibility for polynomials of the form $f_{[0,K],d}(x) = x^d + a_{d-1}x^{d-1} + \dots + a_1x + a_0$, where the $a_i$ are chosen uniformly and independently from the interval $[0,K]$ and K is a positive integer. To study Heuristic~\ref{universality} in this case, we collected data for the probability of reducibility divided by the probability that the constant coefficient is zero  (which is $1/(K+1)$) for various degrees and values of $K$.  Figure~\ref{ZeroK} indicates that, as predicted by Heuristic~\ref{universality}, the ratio of probabilities appears to tend to a constant as $K$ increases and to tend to 1 as the degree increases.


\begin{figure}
    \includegraphics[width=\textwidth,height=.6\textheight,keepaspectratio]{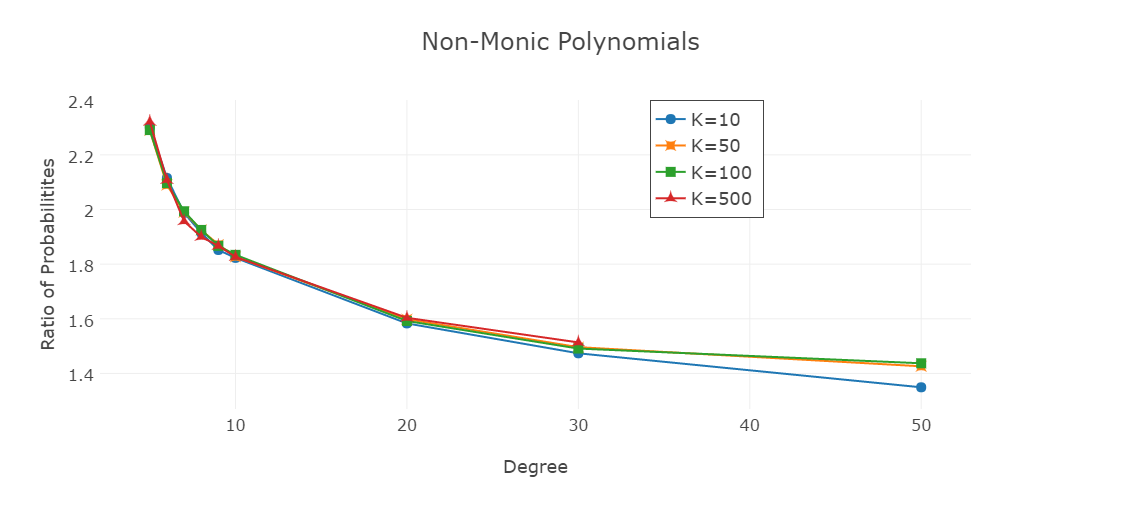}
    \caption{Above is a plot of data supporting a result due to Kuba \cite{kuba} (Theorem~\ref{t:kuba}) for $K=10,50,100,$ and $500$, with the data generated using 
    250,000, 6,250,000, 25,000,000, and 625,000,000 trials, respectively. 
     The points are a ratio of the probability of reducibility divided by $\frac{1}{2K+1}$, which is the probability that the constant coefficient is zero. 
     %
    The $99.9999\%$ confidence interval for each data point is at most $\pm 0.105$.}
    \label{Kuba's Power}
\end{figure}

\subsection*{Non-Monic Uniform Coefficients}

Kuba \cite{kuba} considered non-monic polynomials and proved the following theorem, which we have rephrased in terms of a ratio of probabilities to show its connection with Heuristic~\ref{universality}.

\begin{theorem}[Kuba \cite{kuba}] \label{t:kuba}
Let $h_{K,d}(x) = a_dx^d + a_{d-1}x^{d-1}+\dots+a_0$ be a polynomial  with degree $d > 2$, where the $a_i$ are chosen uniformly and independently from integers in the interval $[-K,K]$, and $a_d$ is nonzero. Then, 
$$\frac{\Pr( h_{K,d}(x) \mbox{ is reducible})}{\frac{1}{2K+1}} \le C_d,\qquad \mbox{for every $K\ge 1$},$$
where $C_d$ is a constant depending only on $d$. (Note that the left-hand side of the inequality above is always at least 1.)
\end{theorem}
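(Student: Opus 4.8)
The plan is to convert the stated probability bound into a counting estimate and then count reducible polynomials by their integer factors using the geometry of numbers. Since there are exactly $2K(2K+1)^d$ admissible polynomials $h_{K,d}$ (the leading coefficient $a_d$ has $2K$ nonzero values and the remaining $d$ coefficients have $2K+1$ each), the claim $\Pr(h_{K,d}\text{ reducible})\le C_d/(2K+1)$ is equivalent to the estimate $\#\{\text{reducible }h\}\le C_d'K^d$ with $C_d'$ depending only on $d$. So it suffices to show that the number of degree-$d$ integer polynomials with $\|h\|_\infty\le K$ that factor over $\mathbb{Q}$ into positive-degree factors is $O_d(K^d)$. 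I would first record a coefficient bound for factors: by Gauss's Lemma each such $h$ is $h=G\tilde F$ with $G,\tilde F\in\mathbb{Z}[x]$ of positive degrees $e$ and $d-e$, and by the standard Mahler/Gelfond inequalities $\|G\|_\infty\le 2^e M(G)\le 2^e M(h)\le 2^d\sqrt{d+1}\,K=:C_1K$ (using $M(h)=M(G)M(\tilde F)$ and $M(\tilde F)\ge 1$), with the analogous bound for $\tilde F$. Every reducible $h$ therefore lies in the lattice $\Lambda_G:=G\cdot\mathbb{Z}[x]_{\le d-e}$ for some $G$ with $\deg G=e\in\{1,\dots,d-1\}$ and $\|G\|_\infty\le C_1K$, so
\[
\#\{\text{reducible }h\}\ \le\ \sum_{e=1}^{d-1}\ \sum_{\substack{G:\ \deg G=e\\ \|G\|_\infty\le C_1K}}\ \#\bigl(\Lambda_G\cap[-K,K]^{d+1}\bigr).
\]

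The core estimate is the inner lattice-point count. Here $\Lambda_G\subseteq\mathbb{Z}^{d+1}$ has rank $r:=d-e+1$ and is spanned by $G,xG,\dots,x^{d-e}G$, so all its successive minima are at most $\sqrt{d+1}\,\|G\|_\infty$; Minkowski's second theorem then yields $\#\bigl(\Lambda_G\cap[-K,K]^{d+1}\bigr)\le C_2(d)\,K^{r}/\det\Lambda_G$. The key structural input is a matching lower bound for the covolume: writing $\det\Lambda_G=\sqrt{\det(S^\top S)}$ for the tall Sylvester matrix $S$ of the multiplication map $F\mapsto GF$, the Cauchy--Binet formula shows that $\det(S^\top S)$ is a homogeneous polynomial of degree $2r$ in the coefficients of $G$, strictly positive whenever $G\ne 0$ (multiplication by a nonzero polynomial is injective). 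By compactness of the unit sphere it follows that $\det\Lambda_G\ge c(d)\,\|G\|_\infty^{\,r}$.

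Combining these, the degree-$e$ term is at most a constant times $K^{r}\sum_{G}\|G\|_\infty^{-r}$. Since the number of $G$ of degree $e$ with $\|G\|_\infty\approx R$ is of order $R^{e+1}$, the weighted sum $\sum_{\|G\|_\infty\le C_1K}\|G\|_\infty^{-r}$ is, dyadically, of the order of $\sum_{R}R^{\,2e-d}$. This is $O(1)$ when $e<d/2$, $O(\log K)$ when $e=d/2$, and $O(K^{\,2e-d})$ when $e>d/2$; multiplying by $K^{r}=K^{d-e+1}$ gives degree-$e$ contributions $O(K^{d-e+1})$, $O(K^{d/2+1}\log K)$, and $O(K^{e+1})$ respectively. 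Using $d>2$ together with $1\le e\le d-1$, each of these is $O_d(K^d)$, and summing over the finitely many $e$ yields $\#\{\text{reducible }h\}=O_d(K^d)$, as required. (The hypothesis $d>2$ is exactly what keeps the $e=d/2$ term from producing an extra logarithmic or higher-order factor.)

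I expect the main obstacle to be carrying out the inner lattice-point count uniformly in $G$. The box $[-K,K]^{d+1}$ is \emph{not} large compared to $\Lambda_G$ in every direction---indeed some successive minima are of size comparable to $K$---so one cannot simply invoke a volume heuristic $K^r/\det\Lambda_G$ and must instead control the count through all the successive minima via Minkowski's second theorem, as indicated above. Once this uniform estimate and the covolume lower bound $\det\Lambda_G\ge c(d)\,\|G\|_\infty^{\,r}$ are in place, the convergence of the weighted sum over $G$---which is precisely the mechanism forcing reducibility to be a probability-$O(1/K)$ event, dominated by factors $G$ with bounded coefficients---is a routine dyadic computation.
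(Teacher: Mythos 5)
The paper does not actually prove Theorem~\ref{t:kuba}; it is quoted from Kuba \cite{kuba} with the statement merely rephrased as a ratio of probabilities, so there is no internal proof to compare against. Your sketch is, as far as I can check, correct in outline. The reduction to the counting statement $\#\{\mbox{reducible } h\}=O_d(K^d)$ is right (the sample space has $2K(2K+1)^d$ points), the Gauss--Landau--Mahler bound $\|G\|_\infty\le 2^d\sqrt{d+1}\,K$ on integer factors is standard and correctly derived, the covolume lower bound $\det\Lambda_G\ge c(d)\|G\|_\infty^{d-e+1}$ via Cauchy--Binet, positivity, homogeneity, and compactness is sound, and the dyadic summation correctly isolates $d>2$ as exactly the hypothesis that keeps the $e=d/2$ term below $K^d$. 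The one step you must write out carefully is the uniform lattice-point count: you need the standard inequality $\#\bigl(\Lambda\cap B(0,R)\bigr)\le\prod_i\bigl(1+2R/\lambda_i\bigr)$ together with the observation that every successive minimum of $\Lambda_G$ is at most $\sqrt{d+1}\,\|G\|_\infty\ll_d K$, so each factor may be replaced by $O_d(K/\lambda_i)$ and Minkowski's second theorem converts $\prod_i\lambda_i$ into $\det\Lambda_G$; you flag this concern yourself and the resolution is as you indicate. For comparison, Kuba's (and, in the monic case, Chela's) argument reaches the same estimate more cheaply: Gelfond's inequality $\|G\|_\infty\|F\|_\infty\le 2^d\|GF\|_\infty$ bounds the cofactor directly by $\|F\|_\infty\ll K/\|G\|_\infty$, so the number of cofactors of a fixed $G$ is $\ll(K/\|G\|_\infty)^{d-e+1}$ with no lattice machinery, and the remaining weighted sum over $G$ is identical to yours. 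Your geometry-of-numbers route buys nothing extra here but is valid; the only item not addressed is the parenthetical claim that the ratio is at least $1$, which is immediate since $a_0=0$ forces $x\mid h$ with $d>1$.
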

Our computer data indicates that the probability that $h_{K,d}(x)$ is reducible divided by $\frac{1}{2K+1}$ decreases to 1 as $d$ goes to infinity; see Figure~\ref{Kuba's Power}. Data with higher degree (we also tested $d = 80,90,100,200$) further suggests that this ratio goes to 1.

\subsection*{Binomial Coefficients}

One natural, non-uniform way to choose coefficients in the interval $[0,K]$ is to use a binomial distribution, $\operatorname{Bin}(K,p)$, where $K$ is a positive integer and $0\le p\le 1$, so that the value $\ell$ is taken with probability $\binom{K}{\ell}p^\ell(1-p)^{K-\ell}$.  Heuristic~\ref{universality} is easiest to understand when the probability of a linear factor is dominated by the probability that the constant coefficient is zero, and thus we let $p = 1/K$, in which case the constant coefficient is zero with probability $(1-1/K)^K$, which is close to $ e^{-1}$ especially as $K$ increases.  Interestingly, this binomial model has features similar to 0,1 polynomials, in that each coefficient takes the value zero with roughly constant probability as $K$ increases, while still having the support of the coefficients tend to infinity as $K$ increases (the binomial distribution approximates a Poisson distribution as $K$ increases).  Data in Figure~\ref{Binomial} suggests that Heuristic~\ref{universality} still holds, and we make the following conjecture.

\begin{conjecture} \label{Binomial Conjecture}
    For a random monic polynomial $h_{\mathrm{Bin},K,d}(x)=x^d+a_{d-1}x^{d-1}+\dots+a_1 x+a_0$ with coefficients $a_0,\dots, a_{d-1}$ chosen from the interval $[0,K]$ using a $\operatorname{Bin}(K,1/K)$ distribution, the probability that the polynomial is reducible divided by the probability that the constant coefficient is zero goes to 1 as $K$ goes to infinity.
\end{conjecture}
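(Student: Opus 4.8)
The plan is to isolate the trivial factorization coming from the constant term and reduce the conjecture to a statement about all \emph{other} ways of factoring. Write $P_0 := \Pr(a_0 = 0) = (1-1/K)^K$, which converges to $e^{-1}$, and observe that for $d \ge 2$ the event $a_0 = 0$ forces $x \mid h_{\mathrm{Bin},K,d}(x)$ and hence reducibility. Partitioning the reducible event according to whether $a_0$ vanishes gives
\[
\Pr\bigl(h_{\mathrm{Bin},K,d}(x) \text{ is reducible}\bigr) = P_0 + \Pr\bigl(h_{\mathrm{Bin},K,d}(x) \text{ is reducible and } a_0 \neq 0\bigr),
\]
so the ratio appearing in the conjecture equals $1 + P_0^{-1}\Pr(\text{reducible and } a_0 \neq 0)$. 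Since $P_0$ is bounded below (it tends to $e^{-1}$), proving the conjecture is equivalent to establishing
\[
\Pr\bigl(h_{\mathrm{Bin},K,d}(x) \text{ is reducible and } a_0 \neq 0\bigr) \longrightarrow 0 \quad\text{as } K \to \infty .
\]
This is precisely the ``lowest-degree factor dominates'' content of Heuristic~\ref{universality}: the surviving route to reducibility should be the trivial factor $x$ arising from a vanishing constant term.

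I would then bound the right-hand event by summing over the possible smallest-degree irreducible factors, grouped by degree $j$ with $1 \le j \le \lfloor d/2 \rfloor$. Because every coefficient is nonnegative and the leading coefficient is $1$, no positive real root is possible, so a linear factor with $a_0 \neq 0$ corresponds to a negative integer root $-m$ with $m \ge 1$ and $m \mid a_0$; here one estimates $\sum_{m \ge 1} \Pr\bigl(h_{\mathrm{Bin},K,d}(-m)=0\bigr)$ via an anti-concentration (local limit) bound for the signed evaluation $\sum_{i} a_i (-m)^i$ of independent $\operatorname{Bin}(K,1/K)$ variables, with large $m$ suppressed by the divisibility constraint $m \mid a_0$ and the rapid growth of $m^i$. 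The $m=1$ term is governed by the same local-limit computation underlying Proposition~\ref{P1}. For $j \ge 2$ I would bound the probability of a degree-$j$ factor with a resultant/union-bound argument in the style of Konyagin~\cite{konyagin}, or with the root-delocalization framework of O'Rourke and Wood~\cite{orourke}, which is built to show that factors of any fixed degree are rare.

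The main obstacle is that, with $d$ held fixed, the coefficients converge in distribution to independent $\operatorname{Poisson}(1)$ variables, so a fixed, non-negligible fraction of the probability mass sits on polynomials with small coefficients. On this regime the union bounds above do not obviously yield a quantity that vanishes: the local probability $\Pr\bigl(h_{\mathrm{Bin},K,d}(-1)=0\bigr)$ governing the factor $x+1$ is a lattice point mass that must be shown to decay, and similarly for the small-coefficient degree-$j$ factorizations, so the estimate of $\Pr(\text{reducible and } a_0 \neq 0)$ must exhibit genuine decay rather than a crude constant bound against the fixed quantity $P_0 \to e^{-1}$. Controlling this small-coefficient regime uniformly in $K$ is the crux, and I would attempt it by combining sharp, $K$-uniform local-limit estimates for the evaluations $h_{\mathrm{Bin},K,d}(-m)$ with a careful accounting of higher-degree factorizations in the spirit of Kuba's analysis~\cite{kuba} of the uniform non-monic model. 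I expect this final step---forcing every subdominant factorization to be provably negligible---to carry essentially all the difficulty of the conjecture.
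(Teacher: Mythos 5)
The first thing to say is that the paper contains no proof of this statement: Conjecture~\ref{Binomial Conjecture} is posed as an open problem, supported only by the Monte Carlo data in Figure~\ref{Binomial}. So there is no argument in the paper to compare yours against, and your proposal must stand on its own --- where, as you yourself concede in the final paragraph, it is a program rather than a proof, with the decisive step unproven.

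There is, moreover, a concrete mathematical problem with the program, not just an unfinished step. Your reduction of the ratio to $1 + P_0^{-1}\Pr(\text{reducible and } a_0\neq 0)$ is correct, but the target you then set --- that $\Pr(\text{reducible and } a_0 \neq 0)\to 0$ as $K\to\infty$ with $d$ fixed --- is provably \emph{false}, so no sharpening of local-limit or resultant bounds can complete it. Since $\operatorname{Bin}(K,1/K)$ converges to $\operatorname{Poisson}(1)$ pointwise (hence in total variation, the state space being discrete), for fixed $d$ the probability of any coefficient event converges to its value under i.i.d.\ Poisson$(1)$ coefficients, and that limit is a strictly positive constant here: for $d=3$, $\Pr\bigl(h = x^3+1\bigr) \to e^{-3}$ and $x^3+1 = (x+1)(x^2-x+1)$; for $d=4$, $\Pr\bigl(h = x^4+x^2+1\bigr)\to e^{-4}$ and $x^4+x^2+1 = (x^2+x+1)(x^2-x+1)$. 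Thus for fixed $d\ge 3$ the ratio tends to a constant strictly greater than $1$ (at least $1+e^{-2}$ for $d=3$), which is consistent with Heuristic~\ref{universality} --- a constant $C$ in the $K\to\infty$ limit, equal to $1$ only as the degree grows --- and with the caption of Figure~\ref{Binomial}, which explicitly says the ratio tends to $1$ \emph{as the degree increases}. The conjecture's wording is loose, and the intended reading must involve $d\to\infty$ (jointly with, or after, $K\to\infty$). You actually brush against this obstruction when you observe that the Poisson limit leaves a non-negligible mass on small-coefficient polynomials, but you misclassify it as a technical hurdle to be overcome with ``$K$-uniform local-limit estimates,'' when it is in fact a counterexample mechanism: the quantity you need to vanish converges to a positive constant. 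Under the corrected large-$d$ reading, your toolkit still falls short for a structural reason: the delocalization results of \cite{orourke} control factors of any \emph{fixed} degree as $d\to\infty$, but your union bound over factor degrees $j\le\lfloor d/2\rfloor$ requires estimates uniform in $j$ up to $d/2$, and it is precisely this high-degree-factor regime that keeps even the simpler $0,1$ analogue (Odlyzko--Poonen \cite{odlyzko}, and Konyagin's refinement \cite{konyagin}) open. In short: the reduction in your first display is fine and matches the paper's framing of Heuristic~\ref{universality}, but everything after it either aims at a false statement (fixed $d$) or inherits the full open difficulty (growing $d$).
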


\begin{figure}
    \includegraphics[width=\textwidth,height=.5\textheight,keepaspectratio]{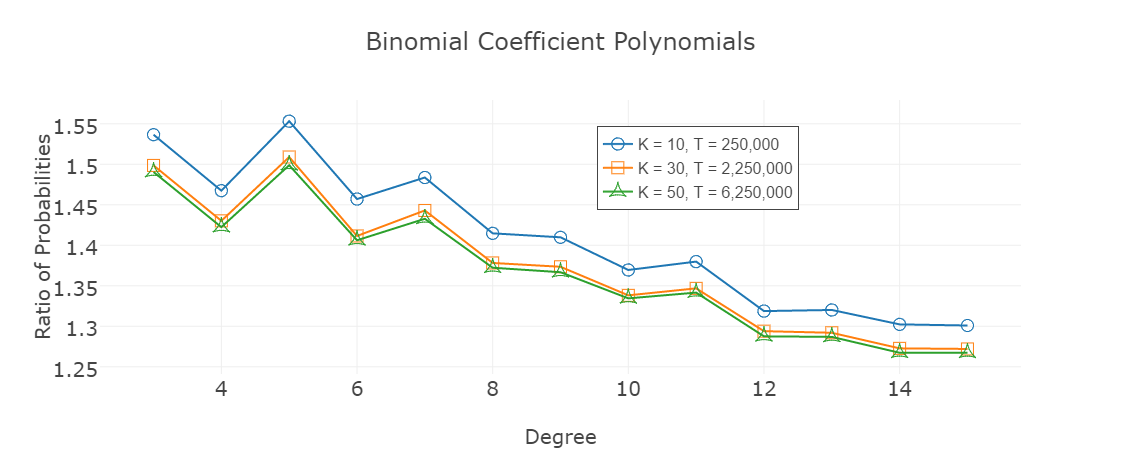}
    \caption{Let $h_{{\rm Bin},K,d}(x)$ be a monic random polynomial with all other coefficients chosen independently from $\{0,1,\dots, K\}$ using a $\operatorname{Bin}(K,1/K)$ distribution.  The plot shows the probability that $h_{{\rm Bin},K}(x)$ is reducible divided by the probability that the constant coefficient is zero for degrees $d = 3$ to $d = 15$ and $K$ equal to 10, 30, and 50.  Note that the ratio appears to tend to $1$ as the degree increases, supporting Heuristic~\ref{universality} (see Conjecture~\ref{Binomial Conjecture}).  For the data points for $K=10$, 30, and 50, the 99.9999\% confidence intervals are $\pm0.0017$, $\pm0.0006$ and $\pm0.0004$, respectively.  }
    \label{Binomial}
\end{figure}

\section{Characteristic polynomial of a random $\pm$1 matrix.} \label{Matrix}

We consider the probability of reducibility for $\chi(x)$, the characteristic polynomial of a $d$ by $d$ matrix with integer entries.
This case is different than the previous two sections because both the degree of the characteristic polynomial and the support of the coefficients grow as $d$ increases.  We study the case where the $d$ by $d$ matrix has entries that are $+1$ or $-1$ independently with probability $\frac{1}{2}$.  Some results are known for other models of random matrices.  In \cite{Rivin2008}, Rivin studies random integer matrices formed as products of random elements in the special linear group $\operatorname{SL}(n,\mathbb Z)$, where the random elements are chosen in two ways: either uniformly over all matrices in $\operatorname{SL}(n,\mathbb Z)$ with coefficients uniformly bounded by a constant, or in terms of word length based on a fixed generating set for $\operatorname{SL}(n,\mathbb Z)$.  Rivin \cite{Rivin2008} shows that the probability that the characteristic polynomial is reducible tends to 0 in both cases as $n$ increases, and also studies $\operatorname{Sp}(n,\mathbb Z)$ (see further extensions in \cite{GNevo2011}).  

In the case where the $d$ by $d$ matrix has entries that are $+1$ or $-1$ independently with probability $\frac{1}{2}$, Figure~\ref{Matrix Plot} indicates that the probability of reducibility decreases as $d$ increases, and the probability of reducibility appears to become close to a lower bound derived from the probability that the matrix has a pair or rows or columns that are dependent (this implies that the constant coefficient, which is the determinant of the matrix, is zero).  It has long been conjectured (see, for example, \cite{komlos,kahn, tao1, tao2, bourgain, AS}) that the probability that a uniform random $d$ by $d$ matrix with $\pm 1$ entries has determinant zero is asymptotically \begin{equation}\label{singbd}
4 \binom{d}{2} \left(\frac12\right)^d =\left(\frac12 + o(1)\right )^d,
\end{equation} where $o(1)$ is a small quantity tending to zero as $d$ tends to infinity.  The  term $4 \binom{d}{2} \left(\frac12\right)^d$ is an asymptotic lower bound coming from the probability that there is a pair of rows or columns that are dependent.  Figure~\ref{Matrix Plot} uses a different asymptotic lower bound (see Proposition~\ref{P5}) that is similar but more accurate for small $d$.  Based on our data we make the following conjecture, which implies the bound in \eqref{singbd} and also supports Heuristic~\ref{universality}.

\begin{conjecture}\label{CharPolyIrred}
If $\chi(x)$ is the characteristic polynomial of a $d$ by $d$ random matrix, where each entry is $+1$ or $-1$ independently with probability $1/2$, then
$$\lim_{d\to\infty} \frac{\Pr( \chi(x) \mbox{ is reducible})}
{4 \binom{d}{2} \left(\frac12\right)^d }
=
1.$$
\end{conjecture}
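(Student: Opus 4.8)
The plan is to split the reducibility event according to whether the underlying matrix $M$ is singular, writing
$$\Pr(\chi \text{ is reducible}) = \Pr(\det M = 0) + \Pr(\chi \text{ is reducible and } \det M \ne 0).$$
The first term is the conjectured main contribution: whenever $\det M = 0$ we have $\chi(0) = (-1)^d\det M = 0$, so $x \mid \chi(x)$ and, since $d \ge 2$, the polynomial $\chi$ is reducible. This immediately gives the lower bound on the ratio. Indeed, the probability that some pair of rows or some pair of columns of $M$ is equal or opposite is, by a Bonferroni (inclusion--exclusion) argument, at least $(1-o(1))\,4\binom{d}{2}(1/2)^d$, because each unordered pair contributes $2\cdot 2^{-d}$ (equal or opposite) and there are $\binom{d}{2}$ row pairs and $\binom{d}{2}$ column pairs, with double coincidences of lower order; every such coincidence forces $\det M = 0$. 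Hence $\liminf_{d\to\infty}$ of the ratio is at least $1$, and the whole content of the conjecture is the matching upper bound.

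For the upper bound I would need two separate estimates. First, a \emph{sharp} singularity bound $\Pr(\det M = 0) \le (1+o(1))\,4\binom{d}{2}(1/2)^d$, asserting that equal or opposite pairs of rows or columns are the dominant source of singularity. Second, negligibility of nonsingular reducibility, namely $\Pr(\chi \text{ is reducible and } \det M \ne 0) = o\!\left(4\binom{d}{2}(1/2)^d\right)$. For this second estimate I would stratify the possible factorizations by the degree of the smallest irreducible factor. A rational root of the monic integer polynomial $\chi$ is a nonzero integer $k$ dividing $\chi(0)$, and such $k$ is an eigenvalue precisely when $\det(M - kI) = 0$; for small $|k|$ one bounds the singularity probability of the shifted matrix $M - kI$ (whose diagonal is perturbed, making equal/opposite rows more constrained and hence rarer), while for large $|k|$ one uses that the spectral radius satisfies $\|M\| = O(\sqrt{d})$ with overwhelming probability, excluding integer eigenvalues beyond that range. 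For factors of fixed low degree $\ge 2$ I would invoke root-delocalization estimates in the spirit of O'Rourke--Wood \cite{orourke}.

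The main obstacle is twofold, and both parts are presently open. The sharp singularity asymptotic with the exact constant $4\binom{d}{2}$ is strictly stronger than the exponential-order bound $\Pr(\det M = 0) = (1/2 + o(1))^d$ recorded in \eqref{singbd}; pinning down the leading constant appears to lie beyond what current Littlewood--Offord and Fourier-analytic methods deliver, and since Conjecture~\ref{CharPolyIrred} in fact implies this sharp asymptotic, any proof must establish it as a sub-result. Even more serious is controlling \emph{high-degree} factorizations of $\chi$, where the polynomial splits into two factors each of degree growing with $d$: no known technique bounds the probability of such a splitting, yet this is exactly the regime that Heuristic~\ref{universality} predicts to be negligible. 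I therefore expect the high-degree factorization estimate to be the decisive difficulty, with the sharp singularity constant a close second.
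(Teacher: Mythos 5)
The statement you were asked to prove is a \emph{conjecture} in the paper: the authors offer no proof, only numerical evidence (Figure~\ref{Matrix Plot}) and a lower bound (Proposition~\ref{P5}). Your treatment is the right one. The part you do prove --- the lower bound $\liminf \ge 1$ via pairs of equal or opposite rows or columns forcing $\det M = 0$, hence $\chi(0)=0$, hence reducibility --- is essentially identical to the paper's Proposition~\ref{P5}, including the constant $4\binom{d}{2}(1/2)^d$ (two ways for a $\pm 1$ pair to be dependent, $\binom{d}{2}$ row pairs plus $\binom{d}{2}$ column pairs) and the inclusion--exclusion correction for double coincidences. Your identification of the two obstacles to the upper bound is also accurate and consistent with the paper's own framing: the sharp singularity asymptotic with leading constant $4\binom{d}{2}$ is strictly stronger than the conjectured bound \eqref{singbd} cited from \cite{komlos,kahn,tao1,tao2,bourgain,AS} and is itself open (and, as you note and as the paper remarks, is implied by Conjecture~\ref{CharPolyIrred}); and ruling out factorizations into two factors of growing degree is exactly the regime that Heuristic~\ref{universality} addresses and for which the paper states no technique is known. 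Your proposed tools for the intermediate regime (shifted singularity bounds for $\det(M-kI)$, spectral-radius truncation of candidate integer eigenvalues, and the low-degree-factor estimates of \cite{orourke}) are sensible and go somewhat beyond what the paper discusses, but they do not close either gap. In short: no error, same lower bound as the paper, and an honest account of why the full statement remains conjectural.
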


\begin{figure}
    \centering
    \includegraphics[keepaspectratio, width=1\textwidth]{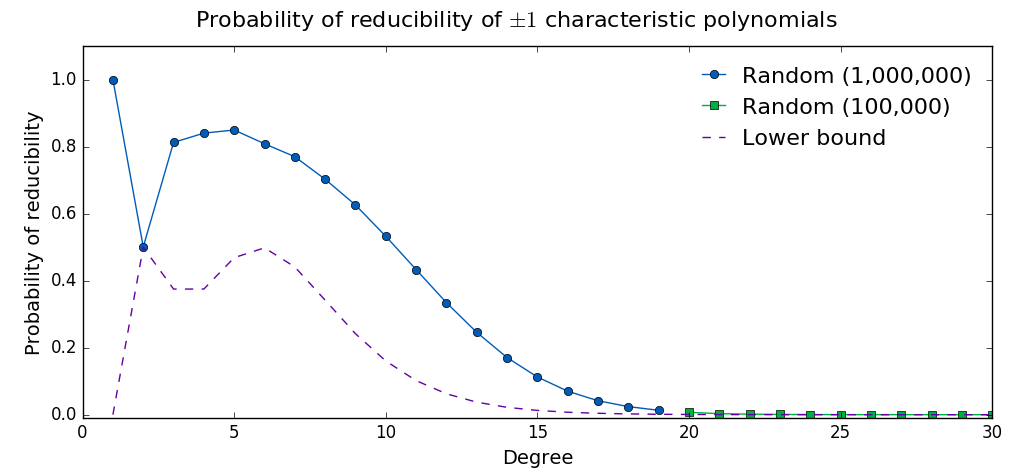}
    \caption{Above is a plot of the probability of reducibility for the characteristic polynomial of a random $d$ by $d$ matrix with entries either $+1$ or $-1$ independently with probability $\frac{1}{2}$. The first $19$ points (circles) were calculated randomly using 1,000,000 trials, giving a 99.9999\% confidence interval of $\pm 0.0025$ for each value. The remaining points (squares) were calculated randomly using 100,000 trials, giving a 99.9999\% confidence interval of $\pm 0.0079$. The asymptotic lower bound curve (dashed line) is from Proposition~\ref{P5}.}
    \label{Matrix Plot}
\end{figure}

\section{Propositions and proofs}\label{Asymptotic Lower Bounds}
In this section we collect lower bounds on the probability of a linear factor in given polynomial models using combinatorics. We use the notation $o(1)$ to signify a small function which tends to 0 as $d$ tends to infinity.
\begin{proposition}\label{P1}
For a random monic polynomial $g_{\{0,1\},d}(x) = x^d + a_{d-1}x^{d-1} + \dots + a_1x + 1$ where $a_i$ is 0 or 1 independently with probability $\frac{1}{2}$, the probability that $g_{\{0,1\},d}(x)$ has a linear factor is $\left(1 + o(1)\right)\sqrt{\frac{2}{\pi d}}$.
\end{proposition}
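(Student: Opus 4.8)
The plan is to reduce the existence of a linear factor to the single event that $x = -1$ is a root, and then to evaluate that probability as a central binomial coefficient whose asymptotics follow from Stirling's formula.

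First I would note that since $g_{\{0,1\},d}(x)$ is monic with integer coefficients, having a linear factor over $\mathbb{Q}$ is equivalent to having an integer root (Gauss's Lemma together with the rational root theorem). Any integer root divides the constant term $1$, so the only candidates are $x = \pm 1$. Because every coefficient is nonnegative with leading and constant coefficients equal to $1$, we have $g_{\{0,1\},d}(1) \ge 2 > 0$, which rules out $x = 1$. Thus the probability of a linear factor equals $\Pr\!\left(g_{\{0,1\},d}(-1) = 0\right)$, and the entire problem is to compute this one probability.

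Next I would expand $g_{\{0,1\},d}(-1) = (-1)^d + 1 + S_{\mathrm{even}} - S_{\mathrm{odd}}$, where $S_{\mathrm{even}}$ and $S_{\mathrm{odd}}$ are the independent sums of the $a_i$ over even and odd indices $i \in \{1, \dots, d-1\}$; these are independent $\mathrm{Bin}(n_{\mathrm{even}}, 1/2)$ and $\mathrm{Bin}(n_{\mathrm{odd}}, 1/2)$ variables. Counting the indices according to the parity of $d$, for odd $d$ the root condition becomes $S_{\mathrm{even}} = S_{\mathrm{odd}}$ with $n_{\mathrm{even}} = n_{\mathrm{odd}} = (d-1)/2$, while for even $d$ it becomes $S_{\mathrm{odd}} - S_{\mathrm{even}} = 2$ with $n_{\mathrm{odd}} = d/2$ and $n_{\mathrm{even}} = d/2 - 1$. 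In either case I would use that $\mathrm{Bin}(n, 1/2)$ has the same law as $n - \mathrm{Bin}(n, 1/2)$ to rewrite the difference of the two independent binomials as a single $\mathrm{Bin}(d-1, 1/2)$ shifted by a constant, reducing the target to $\Pr\!\left(\mathrm{Bin}(d-1, 1/2) = \tfrac{d-1}{2} + j\right) = \binom{d-1}{k} 2^{-(d-1)}$ for a fixed shift $j$, namely $j = 0$ when $d$ is odd and $j = 3/2$ when $d$ is even.

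Finally, Stirling's formula gives $\binom{d-1}{\lfloor (d-1)/2 \rfloor} 2^{-(d-1)} \sim \sqrt{2/(\pi(d-1))} \sim \sqrt{2/(\pi d)}$, and a bounded shift of the index alters the coefficient by only a factor $1 + o(1)$, since the local Gaussian correction $\exp(-2j^2/(d-1))$ tends to $1$ for fixed $j$; this yields $(1 + o(1))\sqrt{2/(\pi d)}$ in both parities. The only genuine work here is bookkeeping: fixing $n_{\mathrm{even}}$ and $n_{\mathrm{odd}}$ correctly for each parity, and confirming that the constant shift in the even-$d$ case does not affect the leading-order asymptotic. Once the event has been reduced to a single binomial point mass, these estimates are routine.
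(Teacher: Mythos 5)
Your proposal is correct and is essentially the paper's argument in probabilistic dress: reducing to the single event that $-1$ is a root, splitting the coefficients by index parity, and collapsing the convolution of the two independent binomials into a single binomial coefficient is exactly the Vandermonde-identity step in the paper's proof, after which both arguments finish with Stirling. The only cosmetic differences are that you invoke the rational root theorem rather than the geometric-series root-location bound to rule out candidates other than $\pm 1$, and you handle the even-degree case as a bounded local-limit shift rather than computing $\binom{d-1}{\frac{d}{2}-2}$ directly; both routes land on the same binomial coefficients.
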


\newcommand\floor[1]{\left\lfloor #1 \right\rfloor}
\newcommand\ceiling[1]{\left\lceil #1 \right\rceil}

\begin{proof}
By a geometric series argument similar to \cite[Lemma 4.1]{orourke}, $x + 1$ is the only possible linear factor for a polynomial $g_{\{0,1\},d}(x)$, and to have $x+1$ as a factor, $-1$ must be a root.  We will consider the cases of $d$ odd and $d$ even separately.

If $d$ is odd, then among the random coefficients $a_i$, there are $\frac{d-1}{2}$  even-degree terms and $\frac{d-1}{2}$ odd-degree terms, and the number of even-degree terms with a non-zero coefficient must equal the number of odd-degree terms with a non-zero coefficient; thus, the number of polynomials with $x + 1$ as a factor is when $d$ is even is

$$\sum_{k=0}^{\frac{d-1}{2}} \binom{\frac{d-1}{2}}{k}  \binom{\frac{d-1}{2}}{k}.$$ Using Vandermonde's identity and  Stirling's approximation $k! = \sqrt{2\pi k}(\frac{k}{e})^d \left(1 + o(1)\right)$, we have 

$$\sum_{k=0}^{\frac{d-1}{2}} \binom{\frac{d-1}{2}}{k}  \binom{\frac{d-1}{2}}{\frac{d-1}{2} - k} = \binom{d-1}{\frac{d-1}{2}} = 2^{d-1}\sqrt{\frac{2}{\pi d}}(1 + o(1)).$$ Thus, the probability of having $x + 1$ as a factor is $\sqrt{\frac{2}{\pi d}} (1+o(1))$.

When $d$ is even, the $x^d$ term and the constant term are both positive and thus, if $k$ of the $\frac d2-1$ random coefficients for even-degree terms are non-zero, then $k+2$ of the $\frac d2$ random coefficients for odd-degree terms must also be non-zero.  Thus the number of polynomials with $x+1$ as a factor when $d$ is odd is
$$\sum_{k=0}^{\frac d2 -2} \binom{\frac d2}{k+2} \binom{\frac d2 -1}{k} = \binom{d-1}{\frac d2 -2}$$
by Vandermonde's identity.  Applying Stirling's approximation we have that the probability of having $x+1$ as a factor
$$\frac{1}{2^{d-1}}\binom{d-1}{\frac d2 -2}=\left(\frac{1}{2^{d-1}}\right)\frac{d-2}{8(d+1)}\binom{d+2}{\frac{d+2}{2}} =
\frac{d-2}{d+1}\sqrt{\frac{2}{\pi(d+2)}}(1+o(1)) = \sqrt{\frac{2}{\pi d}} (1+o(1)),$$
completing the proof.  \end{proof}

\begin{lemma}\label{lemma1}
Consider monic polynomials of the form $g(x) = x^d + a_{d-1}x^{d-1} + \dots + a_1x + a_0$, where $a_i$ is $1$ or $-1$. The only possible linear factors over the integers of such a polynomial are $x+1$ and $x-1$.  If $d$ is even, $g(x)$ cannot have a linear factor. If $d$ is odd, the number of polynomials $g(x)$ having $x - 1$ as a factor is equal to the number of polynomials $g(x)$ having $x + 1$ as a factor, which is equal to  $\binom{d}{\frac{d+1}{2}}$.
\end{lemma}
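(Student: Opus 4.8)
The plan is to split the lemma into its three assertions and dispatch them in order, using the rational root theorem for the first and elementary sign-counting for the rest. To identify the possible linear factors, I would invoke Gauss's Lemma: since $g$ is monic with integer coefficients, any linear factor over $\mathbb{Q}$ may be taken of the form $x - r$ with $r \in \mathbb{Z}$, and $r$ must divide the constant term $a_0$. As $a_0 \in \{+1,-1\}$, the only candidates are $r = \pm 1$, giving $x - 1$ and $x + 1$ as the only possible linear factors. Equivalently, $x - 1 \mid g$ iff $g(1) = 0$ and $x + 1 \mid g$ iff $g(-1) = 0$, so everything reduces to studying the two integer values $g(1)$ and $g(-1)$.

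For the parity claim, I would observe that $g(1) = 1 + \sum_{i=0}^{d-1} a_i$ is a sum of $d+1$ terms each equal to $\pm 1$ (the leading $1$ together with the $d$ coefficients). Reducing modulo $2$, such a sum is congruent to $d+1$, so it can vanish only when $d+1$ is even, i.e. when $d$ is odd. The identical reasoning applies to $g(-1) = (-1)^d + \sum_{i=0}^{d-1}(-1)^i a_i$, again a sum of $d+1$ signs. Hence when $d$ is even neither $g(1)$ nor $g(-1)$ can be $0$, and $g$ has no linear factor.

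Finally, assuming $d$ is odd, I would count both families directly. The condition $g(1) = 0$ forces the $d$ free signs $a_0,\dots,a_{d-1}$ to sum to $-1$; writing $j$ for the number of them equal to $+1$, the sum is $2j - d$, so $j = (d-1)/2$, and the number of admissible sign patterns is $\binom{d}{(d-1)/2} = \binom{d}{(d+1)/2}$. For $x+1$, the cleanest route is the substitution $b_i := (-1)^i a_i$, a bijection of $\{\pm1\}^d$ onto itself that turns $g(-1) = 0$ into $-1 + \sum_i b_i = 0$; this forces exactly $(d+1)/2$ of the $b_i$ to equal $+1$, again yielding $\binom{d}{(d+1)/2}$. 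Alternatively, the involution $g(x) \mapsto -g(-x)$ preserves this family of polynomials (its coefficient of $x^i$ is $(-1)^{i+1}a_i \in \{\pm1\}$ and, for odd $d$, it stays monic) and interchanges $x-1 \mid g$ with $x+1 \mid g$ since $-g(-x)$ evaluated at $-1$ equals $-g(1)$, immediately equating the two counts.

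I do not expect a genuine obstacle here; the proof is almost entirely bookkeeping. The two points that warrant care are correctly incorporating the fixed leading coefficient into the sign count for $g(1)$ and $g(-1)$ (so that one is counting $d+1$ signs, only $d$ of which are free), and the binomial simplification $\binom{d}{(d-1)/2} = \binom{d}{(d+1)/2}$, which follows from $d - (d-1)/2 = (d+1)/2$.
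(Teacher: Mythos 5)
Your proof is correct, and for two of the three claims (the parity obstruction for even $d$ and the count $\binom{d}{(d+1)/2}$ together with the sign-flip bijection between the $x-1$ and $x+1$ families) it is essentially identical to the paper's argument: the paper also counts the negative coefficients forced by $g(1)=0$ among the $d$ free signs and also uses the map $a_i \mapsto (-1)^{i+1}a_i$ to equate the two counts. The one genuine difference is the first step, identifying $x\pm 1$ as the only candidate linear factors. You use the rational root theorem: a monic integer polynomial with an integer linear factor has an integer root dividing $a_0=\pm 1$. The paper instead cites a geometric-series root-localization lemma from O'Rourke--Wood showing that every root of such a polynomial has absolute value strictly between $\tfrac12$ and $2$, which again leaves only $\pm 1$ as integer roots. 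Your route is more elementary and entirely self-contained for this lemma; the paper's choice has the advantage of reusing a tool that is already needed elsewhere (the same localization argument handles the $\{0,1\}$ model in Proposition 4.1, where the rational root theorem alone would not rule out the root $r=-1$ versus $r=+1$ without a further sign observation). Both are complete proofs, and your bookkeeping, including the simplification $\binom{d}{(d-1)/2}=\binom{d}{(d+1)/2}$, is accurate.
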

\begin{proof}
First, we prove that there cannot be a linear factor when $d$ is even, using the fact that having a linear factor implies that there is an integer root.  A geometric series argument shows (see \cite[Lemma 4.1]{orourke}) that for a polynomial $g(x)$, all roots must have absolute value strictly between $\frac{1}{2}$ and 2; thus $1$ and $-1$ are the only possible integer roots for a polynomial $g(x)$. In order to have $1$ or $-1$ as a root, the polynomial must have an equal number of positive and negative terms. When $d$ is even, a degree $d$ polynomial has $d + 1$ terms where $d+1$ is odd, and thus, a linear factor is not possible.

Next, we compute the number of polynomials $g(x)$ with $x-1$ as a factor when $d$ is odd.  Having $x-1$ as a factor means that $g(1)=0$.  Note that $g(x)$ has $d+1$ terms and the leading coefficient is 1, and so of the remaining $d$ terms, there must be enough negative coefficients to sum to 0 when $x=1$.  Thus, there must be 
$\frac{d+1}{2}$ negative coefficients in the remaining $d$ terms.  There are $\binom{d}{\frac{d+1}{2}}$ ways to choose which terms will have a negative sign, so there are $\binom{d}{\frac{d+1}{2}}$ degree $d$ polynomials $g(x)$ with $x - 1$ as a factor when $d$ is odd. 

Finally, we use a bijective proof to show that the number of $g(x)$ with $x-1$ as a factor is equal to the number of $g(x)$ with $x+1$ as a factor.  Let $\tilde g(x) = x^d + a_{d-1}x^{d-1} + \dots + a_1x + a_0$ where $a_i$ is $1$ or $-1$ such that $\tilde g(x)$ is a polynomial with $x = 1$ as a root. Then $g(x) = \sum_{i = 0}^d (-1)^{i+1}x^ia_i$ has $x = -1$ as a root. The mapping of $\tilde g$ to $g$ is bijective, thus completing the proof. \end{proof}

\begin{proposition}\label{P4}
Let $d$ be an odd positive integer.  For a random monic polynomial $g_{\pm 1,d}(x) = x^d + a_{d-1}x^{d-1} + \dots + a_0$ where each $a_i$ is $+1$ or $-1$ independently with probability $1/2$, the probability that $g_{\pm 1, d}(x)$ has a linear factor is
$ \left(2 \sqrt{\frac{2}{\pi(d+1)}} - \frac{4}{\pi(d+1)} \right)(1+o(1)).$

\end{proposition}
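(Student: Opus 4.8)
The plan is to reduce to the two candidate linear factors identified in Lemma~\ref{lemma1} and then apply inclusion--exclusion. By Lemma~\ref{lemma1}, for odd $d$ the only possible linear factors are $x-1$ and $x+1$, and each of these divides exactly $\binom{d}{\frac{d+1}{2}}$ of the $2^d$ polynomials. Writing $A$ for the event that $x-1$ is a factor and $B$ for the event that $x+1$ is a factor, the probability of having a linear factor is $\Pr(A)+\Pr(B)-\Pr(A\cap B)$, so the whole computation splits into evaluating the two (equal) single-factor probabilities and the intersection.

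For the single-factor terms, I would feed $\binom{d}{\frac{d+1}{2}}$ through Stirling's approximation. The clean way is to relate the odd central binomial coefficient to the even one: since $\binom{d+1}{\frac{d+1}{2}}=2\binom{d}{\frac{d+1}{2}}$ by Pascal's rule and symmetry, and $\binom{2M}{M}\sim 2^{2M}/\sqrt{\pi M}$ with $2M=d+1$, one gets $\binom{d}{\frac{d+1}{2}}=2^{d}\sqrt{\frac{2}{\pi(d+1)}}\,(1+o(1))$. Dividing by $2^d$, each of $\Pr(A)$ and $\Pr(B)$ equals $\sqrt{\frac{2}{\pi(d+1)}}(1+o(1))$; this is where the $d+1$ (rather than $d$) inside the radical comes from, and it accounts for the leading term $2\sqrt{\frac{2}{\pi(d+1)}}$.

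The intersection $A\cap B$ is exactly the event that both $1$ and $-1$ are roots, i.e.\ $g(1)=g(-1)=0$. Splitting the coefficients into those at even and odd degrees, and letting $S_e$ and $S_o$ be the respective signed sums, the two conditions $g(1)=0$ and $g(-1)=0$ are equivalent to $S_e=0$ and $S_o=-1$. Counting the number of $\pm1$ assignments achieving this and passing the resulting product of binomial coefficients through Stirling yields $\Pr(A\cap B)=\frac{4}{\pi(d+1)}(1+o(1))$ when $\frac{d+1}{2}$ is even, and $\Pr(A\cap B)=0$ otherwise (since then $S_e=0$ is impossible with an odd number of $\pm1$ summands). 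In either case $\Pr(A\cap B)=O(1/d)$.

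Finally I would assemble the three pieces. Since $\frac{4}{\pi(d+1)}=o\!\left(\sqrt{\frac{2}{\pi(d+1)}}\right)$, the intersection is negligible beside the main term, and one checks directly that $2\sqrt{\frac{2}{\pi(d+1)}}-\frac{4}{\pi(d+1)}=2\sqrt{\frac{2}{\pi(d+1)}}(1+o(1))$; hence the probability of a linear factor equals $\left(2\sqrt{\frac{2}{\pi(d+1)}}-\frac{4}{\pi(d+1)}\right)(1+o(1))$, as claimed. The main obstacle is the intersection estimate: one must notice its dependence on $d\bmod 4$ and verify that it is genuinely of order $1/d$, so that it only perturbs the answer at a level already absorbed by the $(1+o(1))$ factor; the single-factor Stirling estimate is routine once the even-central-binomial identity is used to produce the $d+1$.
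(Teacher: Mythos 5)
Your proposal is correct and follows essentially the same route as the paper: reduce to the factors $x\pm1$ via Lemma~\ref{lemma1}, count each via the central binomial coefficient $\binom{d}{\frac{d+1}{2}}$ and Stirling, and remove the intersection (both $1$ and $-1$ roots, which forces $d\equiv 3\pmod 4$ and contributes $\frac{4}{\pi(d+1)}(1+o(1))$) by inclusion--exclusion. Your added remarks that the intersection vanishes when $d\equiv 1\pmod 4$ and that the $\frac{4}{\pi(d+1)}$ term is in any case absorbed by the $(1+o(1))$ factor are points the paper leaves implicit, but they do not change the argument.
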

\begin{proof}

Using Lemma~\ref{lemma1}, we see that an upper bound for the number of polynomials with $1$ or with $-1$ as roots---double counting polynomials with both $1$ and $-1$ as roots---is $2\binom{d}{\frac{d+1}{2}}= \binom{d+1}{\frac{d+1}{2}}.$  
We will use inclusion-exclusion to correct the double counting.  If both $1$ and $-1$ are roots, then $1 + \sum_{i = 0}^{d-1}a_i = 0$ and $-1 + \sum_{i = 0}^{d-1}(-1)^ia_i = 0$. Combining these two equations shows that $1$ and $-1$ are both roots if and only if 
$$\mathop{\sum_{i = 0,}^{d-1}}_{i\ {\rm even}}a_i = 0 \quad \mbox{and}\quad 1 + \mathop{\sum_{i = 0,}^{d-1}}_{i\ {\rm odd}}a_i = 0.$$  This implies that $\frac{d+1}{2}$ is even, so $d\equiv 3 \pmod 4$.  The number of ways to choose such $a_i\in\{+1,-1\}$ is $\binom{\frac{d+1}{2}}{\frac{d+1}{4}}\binom{\frac{d-1}{2}}{\frac{d+1}{4}}= \frac12\binom{\frac{d+1}{2}}{\frac{d+1}{4}}^2$. By inclusion-exclusion, we thus see that the number of polynomials having $+1$ or $-1$ as a root (with no double counting) is
$$\binom{d+1}{\frac{d+1}{2}}- \frac12\binom{\frac{d+1}{2}}{\frac{d+1}{4}}^2 = \left(2 \sqrt{\frac{2}{\pi(d+1)}} - \frac{4}{\pi(d+1)} \right)2^d(1+o(1)),$$
where the second equality is from Stirling's approximation, which implies that $\binom{k}{\frac k2} = \sqrt{\frac{2}{\pi k}}2^{k}(1+o(1))$ for positive even integers $k$.  Dividing by $2^d$ shows that the probability that $g_{\pm 1,d}(x)$ has a linear factor is
$ \left(2 \sqrt{\frac{2}{\pi(d+1)}} - \frac{4}{\pi(d+1)} \right)(1+o(1)).$
\end{proof}

%
%

We end the section with a proposition and proof giving an asymptotic lower bound on the probability that the characteristic polynomial of a $d$ by $d$ matrix with $+1$ and $-1$ entries is reducible.  Note that one could use \cite[Theorem~2.1]{AS} to find a similar lower bound that holds for all $d$; the bound below  highlights the terms that are relevant for small $d$ and is asymptotically accurate for large $d$.

\begin{proposition}\label{P5}
For a characteristic polynomial $\chi(x)$ of a $d$ by $d$ matrix whose entries are $+1$ or $-1$ independently with probability $\frac{1}{2}$, the probability of reducibility is at least $$4\binom{d}{2}\left(\frac{1}{2}\right)^{d} - 2\binom{d}{2}^2 \left(\frac{1}{2}\right)^d \left(\frac{1}{2}\right)^{d-2}-o(1/2^d).$$
\end{proposition}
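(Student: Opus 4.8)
The plan is to bound the reducibility probability from below by the singularity probability of the matrix, and then to estimate that singularity probability by an inclusion–exclusion over ``repeated'' pairs of rows and columns. Write $A$ for the random $\pm1$ matrix and $\chi(x)=\det(xI-A)$ for its characteristic polynomial. Since the constant coefficient of $\chi$ is $\chi(0)=(-1)^d\det A$, the event $\det A=0$ forces $x\mid\chi(x)$ and hence (for $d\ge 2$) makes $\chi$ reducible. Thus it suffices to establish the stated lower bound for $\Pr(\det A=0)$, and for that it suffices to lower-bound the probability that $A$ has a dependent pair of rows or columns, since any such dependency forces $\det A=0$.

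To set up the inclusion–exclusion, I would introduce for each pair of rows $\{i,j\}$ the event $R_{ij}$ that rows $i$ and $j$ are linearly dependent, and similarly $C_{kl}$ for columns. Two $\pm1$ vectors are dependent exactly when they are equal or opposite, so $\Pr(R_{ij})=\Pr(C_{kl})=2\cdot(1/2)^d$; with $\binom{d}{2}$ pairs of each type this gives the first-order total $4\binom{d}{2}(1/2)^d$. The second Bonferroni inequality then yields $\Pr\!\big(\bigcup R_{ij}\cup\bigcup C_{kl}\big)\ge\sum\Pr(\cdot)-\sum\Pr(\cdot\cap\cdot)$, so the entire problem reduces to estimating the pairwise intersections and showing that all but one family of them is negligible.

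The key computation is the probability that a fixed row pair $\{i,j\}$ and a fixed column pair $\{k,l\}$ are simultaneously dependent. Encoding $\pm1$ entries in $\mathbb{F}_2$, each of the events ``row $i=\pm$ row $j$'' and ``column $k=\pm$ column $l$'' is a system of $d$ affine $\mathbb{F}_2$-linear conditions, for $2d$ conditions in all. These conditions involve disjoint entries except at the four ``corner'' positions $(i,k),(i,l),(j,k),(j,l)$, where the two row-conditions and the two column-conditions satisfy a single $\mathbb{F}_2$-linear dependency (their sum is the trivial relation $0=0$, for every choice of signs), dropping the rank from $2d$ to $2d-1$. Hence each of the four sign-choices occurs with probability $(1/2)^{2d-1}$, so $\Pr(R_{ij}\cap C_{kl})=4\cdot(1/2)^{2d-1}=8\,(1/2)^{2d}=2\,(1/2)^d(1/2)^{d-2}$. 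Summing over the $\binom{d}{2}^2$ ways to pick one row pair and one column pair produces exactly the second term $2\binom{d}{2}^2(1/2)^d(1/2)^{d-2}$. I expect isolating this single corner dependency, and confirming there are no further rank drops, to be the main technical point.

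Finally I would check that every remaining pairwise intersection is absorbed into the error term. Two disjoint row pairs (or column pairs) are independent, each intersection contributing $\big(2(1/2)^d\big)^2$, and two row pairs sharing a common row contribute $O((1/2)^{2d})$ as well; in every case there are $O(d^4)$ such terms, so their combined contribution is $O\!\big(d^4(1/2)^{2d}\big)=o((1/2)^d)$. Combining the first-order term, the explicit row–column correction, and this $o(1/2^d)$ bound through the Bonferroni inequality gives the claimed inequality. One could alternatively invoke \cite[Theorem~2.1]{AS}, but the combinatorial route above is what makes the small-$d$ correction term transparent.
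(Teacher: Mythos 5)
Your proposal is correct and follows essentially the same route as the paper: lower-bound reducibility by the singularity probability, then apply inclusion--exclusion (Bonferroni) over row-pair and column-pair dependencies, with the simultaneous row-and-column dependency giving the second term and all other pairwise intersections absorbed into $o(1/2^d)$. The paper's proof only sketches these steps, whereas you carry out the key computation $\Pr(R_{ij}\cap C_{kl})=4\cdot(1/2)^{2d-1}$ via the rank drop at the four corner entries, which correctly reproduces the stated coefficient $2\binom{d}{2}^2(1/2)^d(1/2)^{d-2}$.
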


\begin{proof}
 The determinant of a matrix is the constant coefficient of its characteristic polynomial, and, if the determinant is 0, then $\chi(x)$ has 0 for a root. The above asymptotic lower bound on the probability of reducibility comes from an asymptotic lower bound on the probability the matrix has a determinant $0$. The first term is the asymptotic probability of 2 rows of the matrix being linearly dependent \textit{or} 2 columns being dependent, double counting when both events occur (note that an additional factor of two appears because there are two ways for vectors with all entries in $\{+1,-1\}$ to be dependent).  The second term is the asymptotic probability of 2 rows \textit{and} 2 columns being dependent at the same time.  Using inclusion-exclusion, we subtract these terms to avoid double counting.  The $o(1/2^d)$ accounts for lower-order terms, for example the probability that two or more distinct pairs of rows each have a dependency.  Note we have included the a second-order term even though it is smaller than the error because it makes the bound more accurate for small $d$.
\end{proof}

\section{Acknowledgements}
We thank Steve Goldstein for his support and guidance in programming and collecting data, and we thank the HTCondor system \cite{basney, litzkow, tannenbaum, thain} at the University of Wisconsin-Madison for making our computations possible.  We also thank the Simons Foundation for providing licenses for Magma, the primary computer algebra system used for this project.

\bibliographystyle{amsplain}

\end{document}